\newcommand{\leqnomode}{\tagsleft@true}
\newcommand{\reqnomode}{\tagsleft@false}
\newcommand{\curly}{\mathcal}
\newcommand{\G}{\boldsymbol{\mathrm{G}}}
\newcommand{\Rep}{\boldsymbol{\mathrm{Rep}}}
\DeclareMathOperator{\Hom}{Hom}
\DeclareMathOperator{\codim}{codim}
\newcommand{\GL}{\mathrm{GL}}
\newcommand{\U}{\mathrm{U}}
\newcommand{\kp}{\vdash}
\newcommand{\yy}{\mathbf{y}}
\newcommand{\cP}{\curly{P}}
\newcommand{\dsqp}{\vDash}
\newcommand{\cQ}{\curly{Q}^\bullet}
\newcommand{\R}{\mathbb{R}}
\newcommand{\Q}{\mathbb{Q}}
\newcommand{\Z}{\mathbb{Z}}
\newcommand{\C}{\mathbb{C}}
\newcommand{\E}{\mathbb{E}}
\newcommand{\N}{\mathbb{N}}
\newcommand{\A}{\mathbb{A}}
\newcommand{\union}{\cup}
\newcommand{\Union}{\bigcup}
\newcommand{\Dirsum}{\bigoplus}
\newcommand{\Tensor}{\bigotimes}
\newcommand{\til}{\widetilde}
\newcommand{\wh}{\widehat}
\newcommand{\iso}{\cong}
\newcommand{\homeo}{\approx}
\newcommand{\hmtpc}{\simeq}
\theoremstyle{plain}
\newtheorem{prop}{Proposition}[section]
\newtheorem{lem}[prop]{Lemma}
\newtheorem{cor}[prop]{Corollary}
\newtheorem{thm}[prop]{Theorem}
\newtheorem*{thm*}{Theorem}
\theoremstyle{definition}
\theoremstyle{remark}
\newtheorem{remark}[prop]{Remark}
\newtheorem{example}[prop]{Example}
\title[Interpolating factorizations for acyclic DT invariants]{Interpolating factorizations for acyclic {D}onaldson--{T}homas invariants}
\author[J.~Allman]{Justin Allman}
\address{Department of Mathematics \\ US Naval Academy \\ Annapolis, MD}
\email{allman@usna.edu}
\subjclass[2010]{Primary 16G20; Secondary 05E10, 05E15, 55N91, 55T99}
\keywords{Quantum dilogarithm, Donaldson--Thomas invariant, Dynkin quiver, acyclic quiver}
\begin{document}

\begin{abstract}
We prove a family of factorization formulas for the combinatorial Donaldson--Thomas invariant for an acyclic quiver. A quantum dilogarithm identity due to Reineke, later interpreted by Rim\'anyi by counting codimensions of quiver loci, gives two extremal cases of our formulation in the Dynkin case. We establish our interpolating factorizations explicitly with a dimension counting argument by defining certain stratifications of the space of representations for the quiver and calculating Betti numbers in the corresponding equivariant cohomology algebras.
\end{abstract}

\maketitle


\section{Introduction}
\label{s:intro}

In a 2010 paper \cite{mr2010}, Reineke proved factorization formulas related to wall-crossing phenomena of Donaldson--Thomas (DT) type invariants associated to quivers. The invariants were described by Kontsevich--Soibelman \cite{mkys2014}. Given an acyclic quiver, Reineke associated a product to each \emph{discrete stability condition} (aka \emph{central charge}). The content of Reineke's theorem is that the products are ``invariants'' since they are {independent} from the choice of stability condition. Reineke's products were interpreted by Keller \cite{bk2011} as factorizations of a \emph{refined (aka combinatorial) DT invariant}, herein denoted $\E_Q$, where each factor is a \emph{quantum dilogarithm series} in the algebra $\Q(q^{1/2})[[z]]$. These series have a rich history in their own right; we refer the reader to the sources \cite{dz1988}, \cite{dz2007}, and \cite{lfrk1994} for some of their remarkable properties.

Keller's analysis expanded the discussion from acyclic quivers to the more general setting of quivers with potential, and furthermore initiated the study of so-called \emph{maximal green sequences} of quiver mutations. In particular, Keller described an algorithm which, to every maximal green sequence of length $r$, associates a factorization of the refined DT invariant with $r$ factors, each of which is a quantum dilogarithm series. We do not give a robust description of maximal green sequences and/or quiver mutations in this paper (instead we refer the reader to \cite{bk2010}), but as an example, consider the equioriented $A_3$ quiver $1 \leftarrow 2 \leftarrow 3$. The sequence given by mutating the quiver at vertices $2,3,2,1$ (in that order) is maximal green and corresponds to the following four-term product of quantum dilogarithms
\begin{equation}
	\label{eqn:mgs.ex}
\E_Q = \E(y_{\alpha_1})\E(y_{\alpha_3})\E(y_{\alpha_2+\alpha_3})\E(y_{\alpha_2})
\end{equation}
which is an element in the \emph{completed quantum algebra} of the quiver (see Sections \ref{s:prelims} and \ref{s:q.dilog} for precise definitions of the above).

One important application of Reineke's result is to quivers which are orientations of simply-laced Dynkin diagrams, the so-called \emph{Dynkin quivers}. In this case, Reineke's factorization result applied to the ``extreme'' stability conditions says (stated with details as our Theorem \ref{thm:reineke.rimanyi})
\begin{equation}
	\label{eqn:intro.dynkin.ex}
\E(y_{\alpha_1}) \cdots \E(y_{\alpha_n}) = \E_Q = \E(y_{\beta_1}) \cdots \E(y_{\beta_N})
\end{equation}
where the lefthand factorization is in terms of \emph{simple roots} $\alpha_1,\ldots,\alpha_n$ for the associated root system, and the righthand factorization is in terms of the corresponding \emph{positive roots} $\beta_1,\ldots,\beta_N$. In 2013, Rim\'anyi reinterpreted each side of the above identity through a dimension counting argument for quiver loci \cite{rr2013}, which are orbits in the space of quiver representations. These are important geometric objects with remarkable combinatorial properties, see e.g.~\cite{ab2008}, \cite{rr2014}, \cite{ja2014.ir}, and \cite{rk2017} for examples and current progress on this front. Rim\'anyi's method gives an explicit description, i.e.~not algorithmic, of the product on each side (which are the extremal examples of our factorizations) by computing Betti numbers in the equivariant cohomology algebras of the quiver loci. We call this technique the \emph{topological viewpoint}.

We observe that \eqref{eqn:mgs.ex} interpolates between the lefthand and righthand sides of \eqref{eqn:intro.dynkin.ex} in the sense that it contains factors for each simple root, but only some of the positive roots of $A_3$. It is natural to ask if the product \eqref{eqn:mgs.ex} can also be obtained \emph{explicitly} from a topological viewpoint. Indeed this paper's major accomplishment is to produce, by one general explicit dimension-counting method, a family of interpolating factorizations including members coming from both the implicit maximal green sequence algorithm and the ``extreme'' factorizations akin to \eqref{eqn:intro.dynkin.ex}. To do so, we require a generalization in two directions.

First, we extend the application of the topological viewpoint from quiver orbits to stratifications of the representation space in which each stratum is a union of orbits. We describe these strata in terms of rank and transversality conditions on elements in the space of quiver representations. We then calculate the Betti numbers of our strata and use a spectral sequence argument and properties of the quantum algebra associated to the quiver to obtain quantum dilogarithm identities. To do so, we must first introduce the concept of a \emph{Dynkin subquiver partition} of a quiver, and establish new necessary and sufficient admissibility criteria to properly order subsets of positive roots of the Dynkin diagrams. This is our Theorem \ref{thm:total.order.exists}. The root orderings provide explicit directions for how to multiply in the quantum algebra.

Second, with the extra freedom provided by taking unions of quiver orbits, we are able to recover the more general setting from \cite{mr2010} of acyclic quivers (not just Dynkin quivers). In the Dynkin case, this results in a family of factorization formulas for $\E_Q$ which interpolate between the two sides of Reineke's (and Rim\'anyi's) identity \eqref{eqn:intro.dynkin.ex}, and for which \eqref{eqn:mgs.ex} is an example of one of the intermediate formulas. Our general result concerning all acyclic quivers is Theorem \ref{thm:main}. 

The paper is organized as follows. In Section \ref{s:prelims} we give the necessary preliminary definitions, results, and notations needed throughout the paper. In particular, we construct the stratifications on the space of quiver representations which play a role throughout and prove admissibility conditions on stratifications. In Section \ref{s:q.dilog} we define the quantum dilogarithm series and relate it to relevant equivariant cohomology algebras. In Section \ref{s:dilogs.DT} we state our main theorem, and relate it to the context of Reineke's and Rim\'anyi's factorization formulas. In Section \ref{s:count.codim} we perform an important calculation in the quantum algebra of the quiver which produces the geometric data of the codimensions of our quiver strata. In Sections \ref{s:reduction.normal.forms} and \ref{s:KSS} we study the equivariant geometry and topology of our quiver strata, and describe our method for computing and relating their Betti numbers. Finally in Section \ref{s:main.thm.pf} we prove the main Theorem \ref{thm:main} and in Section \ref{s:FD} we describe several possible directions for future application and generalization of the present work.

\subsection*{Acknowledgements}

Throughout a portion of the period under which this work was completed, the author was supported by a grant from the Office of Naval Research (ONR) and Junior Naval Academy Research Council (NARC).


\section{Quiver preliminaries}
\label{s:prelims}

\subsection{Quivers and quiver representations}
	\label{ss:quivers.representations}

A quiver $Q = (Q_0,Q_1)$ is a finite set of vertices $Q_0$ and a finite set of directed edges $Q_1$ called \emph{arrows}. The direction of each arrow is encoded by associating to each $a\in Q_1$ its \emph{tail} and respectively \emph{head} vertices, denoted $ta\in Q_0$ and respectively $ha\in Q_0$. A quiver is \emph{acyclic} if it contains no oriented cycles (in particular, no loop arrows). In the rest of the paper, we only consider acyclic quivers; in fact, throughout the rest of the exposition, we fix an acyclic quiver $Q$ and order the vertices $Q_0 = \{1,2,\ldots,n\}$ so that the head of any arrow comes before its tail. This is always possible for acyclic quivers, see e.g.~\cite[1.5.2]{hdjw2017}. We will justify this choice of ordering for our purposes later, see Section \ref{ss:order.roots}.

A \emph{dimension vector} $\gamma = (\gamma(i))_{i\in Q_0}$ is a list of non-negative integers, one associated to each vertex. We let $D_Q = \N^{Q_0}$ denote the monoid of dimension vectors for $Q$. For $i\in Q_0$, let $e_i \in D_Q$ be the dimension vector with $e_i(i) = 1$ and $e_i(j) = 0$ for all other $j\neq i$. Thus we can identify $D_Q = \Dirsum_{i\in Q_0} \N\,e_i$.

Given a dimension vector $\gamma \in D_Q$, we can associate vector spaces $V_i = \C^{\gamma(i)}$ to each $i\in Q_0$, and consequently we form the vector space 
	\[ 
		\Rep_\gamma(Q) = \Dirsum_{a\in Q_1} \Hom\left(V_{ta},V_{ha}\right).
	\] 
Each element of $\Rep_\gamma(Q)$ is called a \emph{quiver representation} and amounts to a choice of linear mapping along each arrow and hence, after choosing bases, assigning an appropriately sized matrix to each $a\in Q_1$. The \emph{base-change group} $\G_\gamma = \prod_{i\in Q_0} \GL(V_i)$ acts on $\Rep_\gamma(Q)$ by changing bases in the head and tail of each arrow; i.e. via 
	\[ 
		(g_i)_{i\in Q_0} \cdot (x_a)_{a\in Q_1} = (g_{ha} x_a g_{ta}^{-1})_{a\in Q_1}
	\]
for $(g_i)_{i\in Q_0} \in \G_\gamma$ and $(x_a)_{a\in Q_1} \in \Rep_\gamma(Q)$.

Furthermore, for any quiver we define its $\N$-bilinear \emph{Euler form} $\chi:D_Q \times D_Q \to \Z$ by the formula
	\[
		\chi(\gamma_1,\gamma_2) 
			= \sum_{i\in Q_0} \gamma_1(i)\gamma_2(i) 
				- \sum_{a\in Q_1} \gamma_1(ta)\gamma_2(ha).
	\]
We will need the opposite anti-symmetrization of the Euler form given by
	\[
		\lambda(\gamma_1,\gamma_2) = \chi(\gamma_2,\gamma_1) - \chi(\gamma_1,\gamma_2).
	\]
Observe that $\lambda(e_i,e_j)$ is the number of arrows $i\to j$ minus the number of arrows $j \to i$; i.e.
	\begin{equation}
		\label{eqn:lambda.trees}
		\lambda(e_i,e_j) = 
		\left| \{a\in Q_1:ta = i,\,ha=j\} \right|
			- \left| \{a\in Q_1:ta = j,\,ha=i\} \right|	.
	\end{equation}
Hence our choice for ordering the vertices $Q_0 = \{1,\ldots,n\}$ implies by Equation \eqref{eqn:lambda.trees} that $\lambda(e_i,e_j) \leq 0$ whenever $i<j$. Because $\lambda$ counts arrows (and encodes their direction with a sign), given any subset of arrows $A\subseteq Q_1$, we can define the following restriction
	\[
	\lambda_A(e_i,e_j) = \left|\{a \in A : ta=i, ha=j\}\right| - \left|\{a\in A : ta=j, ha=i\}\right|
	\]
which we will need to satisfy a technical punctilio in Section \ref{ss:order.roots}. As in Equation \eqref{eqn:lambda.trees}, we agree to write $\lambda_{Q_1} = \lambda$.

\subsection{The quantum algebra of a quiver}
	\label{ss:quantum.alg}

Let $q^{1/2}$ be a variable and denote its square by $q$. The \emph{quantum algebra} $\A_Q$ of the quiver $Q$ is the $\Q(q^{1/2})$-algebra generated by symbols $\{y_\gamma:\gamma\in D_Q\}$ and satisfying the relations
	\begin{equation}
		\label{eqn:qalg.rel}
		y_{\gamma_1+\gamma_2} = -q^{-\frac{1}{2} \lambda(\gamma_1,\gamma_2)}y_{\gamma_1}y_{\gamma_2}.
	\end{equation}
The symbols $y_\gamma$ form a basis of $\A_Q$ as a vector space and the elements $\{y_{e_i}:i\in Q_0\}$ generate $\A_Q$ as an algebra. Notice that the relation \eqref{eqn:qalg.rel} implies the commutation relation
	\begin{equation}
		\label{eqn:qalg.comm}
		y_{\gamma_1}y_{\gamma_2} = q^{\lambda(\gamma_1,\gamma_2)} y_{\gamma_2}y_{\gamma_1}.
	\end{equation}
Furthermore, we let $\wh{\A}_Q$ denote the \emph{completed quantum algebra} in which formal power series in the symbols $y_\gamma$ are allowed, i.e.~the quotient of $\Q(q^{1/2})\langle\langle\{y_\gamma\}\rangle\rangle$ modulo relations given by \eqref{eqn:qalg.rel}. This is indeed the completion of $\A_Q$ with respect to the ideal generated by the elements $y_{e_i}$, $i\in Q_0$, and is sometimes in the literature called the \emph{formal quantum affine space}, see e.g.~\cite{bk2011}.

\subsection{Dynkin quivers}
	\label{ss:dynkin.quivers}

A \emph{Dynkin quiver} $Q$ is an orientation of a simply-laced Dynkin diagram; i.e.~of type $A$, $D$, or $E$. Fix a set of simple roots for the Lie algebra corresponding to the underlying Dynkin diagram. These are in bijection with the vertices of the quiver and we write $\alpha_i$ for the simple root associated to the vertex $i\in Q_0$. Furthermore, we can associate $\alpha_i$ with the dimension vector $e_i\in D_Q$, and this is done freely and without comment in the sequel. 

Let $\Phi$ denote the corresponding set of positive roots. If $|\Phi| = N$ write $\Phi = \{\beta_1,\ldots,\beta_N\}$ where, again, we postpone further discussion of this ordering until Section \ref{ss:order.roots}. Each positive root has a unique decomposition as a sum of simple roots which we will write as
	\[
		\beta_v = \sum_{i\in Q_0} d^i_v \alpha_i
	\] 
with $1\leq v \leq N$ and each $d^i_v \in \N$. Observe that this naturally identifies each positive root $\beta_v$ with a dimension vector in $D_Q$.

Given any dimension vector $\gamma \in D_Q$, a \emph{Kostant partition} of $\gamma$ is a list of non-negative integers $m = (m_v)_{1\leq v \leq N}$ such that
\[
	\gamma = \sum_{v=1}^N m_v \beta_v \in D_Q.
\]
In this case, we write $m\kp \gamma$. A now-classical result of Gabriel \cite{pg1972} implies that there are finitely many $\G_\gamma$-orbits in $\Rep_\gamma(Q)$, for all $\gamma$, exactly when $Q$ is a Dynkin quiver. In particular, when $Q$ is Dynkin, the orbits are in one-to-one correspondence with Kostant partitions of $\gamma$. Throughout the rest of the paper, we let $\Omega_m \subseteq \Rep_\gamma(Q)$ denote the $\G_\gamma$-orbit associated to the Kostant partition $m\kp\gamma$.

\begin{example}
	\label{ex:orbit}
Let $Q$ be the $A_3$ quiver $1 \leftarrow 2 \leftarrow 3$ with dimension vector $\gamma = (2,3,2)$. Let the six positive roots be ordered as follows
\begin{align*}
\beta_1 &= \alpha_3 					& \beta_2 &= \alpha_2+\alpha_3 	& \beta_3 & = \alpha_2 \\
\beta_4 &= \alpha_1+\alpha_2+\alpha_3 	& \beta_5 &= \alpha_1+\alpha_2	& \beta_6 & = \alpha_1
\end{align*}
and consider the Kostant partition $m_1 = m_3 = m_4 = m_5 = 1$ and $m_2 = m_6 = 0$. Since the positive roots in type $A$ correspond to intervals, this can be represented by the picture
\[
	\begin{diagram}[height=1em]
	\bullet 	& \lTo 	& \bullet 	& \lTo 	& \bullet \\
	\bullet 	& \lTo 	& \bullet 	& 		& \bullet \\
				&		& \bullet	&		&			
	\end{diagram}
\]
which is called a \emph{lacing diagram}. The lacing diagrams were first used by Abeasis and Del Fra to parameterize type A quiver orbits \cite{saadf1980}; such diagrams do not exist in types D or E, but we present the diagram here to make the following description of the orbits more clear. Explicitly, from the picture it is straightforward to describe the orbit $\Omega_m \subseteq \Rep_\gamma(Q)$ geometrically. In particular, $\Omega_m$ consists of those quiver representations for which the linear map on the lefthand arrow $1 \leftarrow 2$ has maximum possible rank (i.e.~$2$), the linear map on the righthand arrow $2 \leftarrow 3$ has rank $1$, and such that the image of the righthand arrow is transverse to the kernel of the lefthand arrow. The upshot is that each Kostant partition $m \kp \gamma$ amounts to a choice of specific rank and transversality conditions on the maps along each arrow. This idea still passes to types D and E, even though the lacing diagrams do not.
\end{example}

For fixed $m\kp\gamma$, we choose a distinguished point $\nu_m \in \Omega_m$ which we call the \emph{normal form} of $\Omega_m$. The terminology comes from analogy with Smith Normal Form of a matrix (which is a distinguished point in the orbit of an $A_2$ quiver) or, for a non-Dynkin example, the Jordan Normal Form (which is a distinguished point in the orbit of a quiver with one vertex and one loop arrow), see respectively \cite{hdjw2017} Examples 1.4.2 and 1.4.3. Sometimes, to simplify or organize computations one can be quite specific about \emph{how} to choose a normal form, see e.g.~\cite[Section~2.6]{jarr2018}, but in the context of the present paper we only need to fix \emph{some} normal form; any point in $\Omega_m$ suffices for our arguments. In particular, we will later need the following result of Feher--Rimanyi.

\begin{prop}[Proposition~3.6, \cite{lfrr2002.duke}]
	\label{prop:FR}
Let $\G_{\Omega_m} \subseteq \G_\gamma$ denote the isotropy subgroup of $\Omega_m \subseteq \Rep_\gamma(Q)$, more precisely the isotropy (aka stabilizer) subgroup of the point $\nu_m \in \Omega_m$. Then, up to homotopy
\[
\G_{\Omega_m} \hmtpc \prod_{v=1}^N \U(m_v)
\]
where $\U(k)$ denotes the unitary group of $k\times k$ matrices.
\end{prop}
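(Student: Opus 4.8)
The plan is to identify the isotropy group with the automorphism group of a quiver representation and then read off its homotopy type from the Wedderburn structure of a finite-dimensional algebra. By Gabriel's theorem we may take the normal form to be $\nu_m = \Dirsum_{v=1}^{N} M_v^{\dirsum m_v}$, where $M_v$ is the indecomposable representation of $Q$ with dimension vector $\beta_v$; since any two normal forms lie in the single orbit $\Omega_m$, their isotropy groups are conjugate in $\G_\gamma$, hence isomorphic as Lie groups, so this choice costs nothing. Unwinding the formula for the $\G_\gamma$-action, a tuple $(g_i)_{i\in Q_0}$ fixes $\nu_m = (x_a)_{a\in Q_1}$ exactly when $g_{ha} x_a = x_a g_{ta}$ for every arrow $a$, i.e.\ exactly when $(g_i)$ is an automorphism of $\nu_m$ in the category of representations of $Q$. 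Therefore $\G_{\Omega_m}$ is the group of units $A^\times$ of the finite-dimensional $\C$-algebra $A = \Hom_Q(\nu_m,\nu_m)$.

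Next I would determine the structure of $A$. Because $Q$ is Dynkin, each indecomposable $M_v$ is a brick, $\Hom_Q(M_v,M_v)\iso\C$, and the $M_v$ are pairwise non-isomorphic; writing endomorphisms of $\nu_m$ in block form along the summands, the $v$-th diagonal block is $\mathrm{Mat}_{m_v}(\C)$ while the off-diagonal blocks are assembled from the spaces $\Hom_Q(M_v,M_w)$ with $v\neq w$. Standard structure theory for endomorphism rings of modules whose indecomposable summands have local endomorphism rings then gives that $J := \mathrm{rad}(A)$ is nilpotent and $A/J \iso \prod_{v=1}^{N}\mathrm{Mat}_{m_v}(\C)$; by the Wedderburn--Malcev theorem there is a semisimple subalgebra $S\subseteq A$ with $A = S\dirsum J$ and $S \iso \prod_{v=1}^{N}\mathrm{Mat}_{m_v}(\C)$, so $S^\times \iso \prod_{v=1}^{N}\GL_{m_v}(\C)$.

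Finally I would extract the homotopy type. Since $J$ is a nilpotent ideal, $1+J$ is a normal subgroup of $A^\times$ that is isomorphic as a variety to the affine space $J$ via $j\mapsto 1+j$, hence contractible; and every unit of $A$ factors uniquely as $s(1+j)$ with $s\in S^\times$ and $j\in J$ (uniqueness because $S\intersect J = 0$), so $A^\times = S^\times\ltimes(1+J)$ and the assignment $s(1+j)\mapsto s(1+tj)$, $t\in[0,1]$, is a deformation retraction of $A^\times$ onto $S^\times$. Composing with the usual polar-decomposition retraction of $\prod_v \GL_{m_v}(\C)$ onto $\prod_v \U(m_v)$ gives $\G_{\Omega_m}\hmtpc\prod_{v=1}^{N}\U(m_v)$. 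The only step that uses more than formal manipulation is the identification $A/J\iso\prod_v\mathrm{Mat}_{m_v}(\C)$, which rests on the Dynkin-specific fact that the $M_v$ are bricks; the remaining work is routine, the one subtlety being to arrange the splitting $A^\times = S^\times\ltimes(1+J)$ continuously so that one obtains an actual deformation retraction rather than merely an abstract isomorphism between maximal compact subgroups.
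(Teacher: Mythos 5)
Your argument is correct and is essentially the proof of the cited source (Feh\'er--Rim\'anyi, Proposition~3.6): identify $\G_{\Omega_m}$ with the unit group of the finite-dimensional algebra $\Hom_Q(\nu_m,\nu_m)$, use that the indecomposables of a Dynkin quiver are pairwise non-isomorphic bricks to split off the nilpotent radical and obtain a deformation retraction onto $\prod_v \GL(m_v,\C)$, and then retract onto the maximal compact subgroup $\prod_v \U(m_v)$. The present paper quotes this proposition without reproducing a proof, so there is nothing in the text to compare against beyond noting that your route is the standard one.
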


Observe that the stabilizer subgroup of any point in $\Omega_m$ is conjugate isomorphic to that for $\nu_m$; this is the sense in which our specific choice of the normal form is not important.

\subsection{Dynkin subquiver partitions}
	\label{ss:subquiver.parts}
	
We begin this subsection by reviewing several definitions. A \emph{subquiver} $Q'$ of $Q$ is a quiver with $Q'_0 \subseteq Q_0$ and $Q'_1 \subseteq Q_1$. A quiver $Q$ is \emph{connected} if its underlying non-oriented graph is connected and is \emph{nonempty} if $Q_0 \neq \emptyset$.

Let $Q^1,\ldots,Q^\ell$ be disjoint, nonempty, connected, Dynkin subquivers of $Q$ such that $Q_0 = \Union_{j=1}^\ell Q^j_0$. In particular, each vertex $i\in Q_0$ appears in exactly one of the subquiver vertex sets $Q^j_0$. On the other hand, notice that not every $a\in Q_1$ is required to appear in one of the arrow sets $Q^j_1$. We call the data of the quivers $Q^1,\ldots,Q^\ell$ a \emph{Dynkin subquiver partition of $Q$} and we write $\cQ = \{Q^1,\ldots,Q^\ell\} \dsqp Q$.

If $D^j := D_{Q^j}$ denotes the monoid of dimension vectors for $Q^j$, then there is a natural inclusion $D^j \subseteq D_Q$ by putting zero in the component for every vertex in $Q_0\setminus Q^j_0$. Analogously, for each subquiver $Q^j$ let $\Phi^j$ denote its set of positive roots, and observe that $\Phi^j$ is naturally a subset of $\Phi$. Let $r_j = |\Phi^j|$ and we introduce the following notation for the positive roots in $\Phi^j$:
\[
\Phi^j = \{\beta^j_1,\beta^j_2,\ldots,\beta^j_{r_j}\}
\]
where still we postpone discussion on our choice for ordering these roots until Section \ref{ss:order.roots}.

For $\gamma\in D_Q$ let $\gamma^j \in D^j$ denote the restriction of $\gamma$ to the vertices $Q^j_0$. We will call an ordered list $m = (m^1,\ldots,m^\ell)$ such that each $m^j \kp \gamma^j$ a \emph{Kostant series} of $\gamma$. Since this definition of Kostant series depends \emph{a priori} on the Dynkin subquiver partition $\cQ$, we say that such an $m$ is \emph{compatible (with $\cQ$)}. In abuse of notation, we also write $m \kp \gamma$, even though we understand that the symbology depends on the choice of $\cQ$. Recall that each Kostant partition $m^j\kp \gamma^j$ is associated to a quiver orbit $\Omega_{m^j} \subseteq \Rep_{\gamma^j}(Q^j)$, and we define the \emph{quiver strata associated to $m$} to be the subspace
\begin{equation}
\label{eqn:quiver.strata.defn}
\eta_m = \left\{ (x_a)_{a\in Q_1} \in \Rep_\gamma(Q) : (x_a)_{a\in Q^j_1} \in \Omega_{m^j} \text{ for all $j$} \right\} .
\end{equation}

\begin{remark}[On our terminology]
As a sequence of numbers, such an $m$ actually satisfies the condition to be a Kostant partition of $\gamma$ in the sense that $\gamma = \sum_{j=1}^\ell\sum_{k=1}^{r_j} m^j_k \beta^j_k$; although, we have only defined the words ``Kostant partition'' when $Q$ is itself Dynkin. However from the ``Kostant partition'' point of view, the components of elements in $\Omega_m$ corresponding to arrows not included in any $Q^j$ are zero, while in \eqref{eqn:quiver.strata.defn} the components along these arrows can assume arbitrary values from $\Hom(V_{ta},V_{ha})$. This justifies the use of the new term, i.e.~``Kostant series'', whenever this is our intention.
\end{remark}

\begin{example}
	\label{ex:subquiv.part}
Again suppose that $Q$ is the $A_3$ quiver: $1 \leftarrow 2 \leftarrow 3$. There are $4$ possible Dynkin subquiver partitions of $Q$ (two with $\ell = 2$). These are listed in Table \ref{tab:subquiv.example}.

\begin{table}
	\begin{tabular}{|c|c|}
	\hline
	$\ell$ 	& {$\cQ$} \\
	\hline
	$1$		& {$Q^1 = Q$} \\
	\hline
	$2$		& $Q^1 = \{1\}$, $Q^2 = \{2 \leftarrow 3\}$ or $Q^1 = \{1\leftarrow 2\}$, $Q^2 = \{3\}$ \\
	\hline
	$3$		& {$Q^1 = \{1\}$, $Q^2 = \{2\}$, $Q^3 = \{3\}$}\\
	\hline
	\end{tabular}
	\caption{Dynkin subquiver partitions for the quiver $1 \leftarrow 2 \leftarrow 3$.}
	\label{tab:subquiv.example}
\end{table}
Let $\gamma \in D_Q$. In the case $\ell = 1$ a compatible Kostant series $m\kp\gamma$ is exactly a Kostant partition and the quiver strata $\eta_m$ is exactly the quiver orbit $\Omega_m$. In the case $\ell = 3$, there is a unique compatible Kostant series with $m^1 = ((\gamma(1)))$, $m^2 = ((\gamma(2)))$, $m^3 = ((\gamma(3)))$. 

For the interpolating case $\ell = 2$, let us examine the choice $Q^1 = \{1\}$, $Q^2 = \{2 \leftarrow 3\}$ further. We have $\Phi^1 = \{\beta^1_1\}$ and $\Phi^2 = \{\beta^2_1,\beta^2_2,\beta^2_3\}$ with
\begin{align*}
\beta^1_1 &= \alpha_1 & \beta^2_1 &= \alpha_3 & \beta^2_2 &= \alpha_2 + \alpha_3 & \beta^2_3 &= \alpha_2.
\end{align*}
Fix the dimension vector $\gamma = (2,3,2)$ and consider the compatible Kostant series
\[
m = \left( (2) , (1,1,2) \right) \kp \gamma
\]
that is $m^1_1 = 2$, $m^2_1 = 1$, $m^2_2 = 1$, and $m^2_3 = 2$. The quiver stratum $\eta_m$ is then the set of quiver representations for which the linear mapping along the arrow $2 \leftarrow 3$ has rank $1$. Notice $\eta_m$ is not an orbit, but is the union of the $5$ orbits parameterized by the lacing diagrams below.
\begin{center}
\begin{tabular}{|c|c|c|c|c|}
\hline
&&&&\\
	{\begin{diagram}[height=1em,width=1em]
	\bullet 	&  & \bullet 	& \lTo	& \bullet \\
	\bullet 	&  & \bullet 	& 		& \bullet \\
			 	&  & \bullet 	& 		&
	\end{diagram}} 
	&
	{\begin{diagram}[height=1em,width=1em]
	\bullet 	& \lTo & \bullet 	& \lTo	& \bullet \\
	\bullet 	&  & \bullet 	& 		& \bullet \\
			 	&  & \bullet 	& 		&
	\end{diagram}}  
	& 
	{\begin{diagram}[height=1em,width=1em]
	\bullet 	&  & \bullet 	& \lTo	& \bullet \\
	\bullet 	& \lTo & \bullet 	& 		& \bullet \\
			 	&  & \bullet 	& 		&
	\end{diagram}}  
	& 
	{\begin{diagram}[height=1em,width=1em]
	\bullet 	& \lTo & \bullet 	& \lTo	& \bullet \\
	\bullet 	& \lTo & \bullet 	& 		& \bullet \\
			 	&  & \bullet 	& 		&
	\end{diagram}}  
	& 
	{\begin{diagram}[height=0.5em,width=1em]
	\bullet 	&  		& \bullet 	& \lTo	& \bullet \\
				&\luTo 	& 			& 		& \\
	\bullet 	&  		& \bullet 	& 		& \bullet \\
				&\luTo 	& 			& 		& \\
			 	&  		& \bullet 	& 		&
	\end{diagram}} \\
&&&&\\
\hline
\end{tabular}
\end{center}
\end{example}

In fact, we can make a precise statement about the dimension of quiver strata in terms of the orbits $\Omega_{m^j}$ in the representation spaces of each subquiver $\Rep_{\gamma^j}(Q^j)$. Since we are imposing no extra rank or transversality conditions on the mappings along arrows $a\in Q_1\setminus(\Union_{1\leq j \leq \ell}Q^j_1)$, the next proposition follows immediately from the definitions, cf.~\cite[Proposition~7.2]{jarr2018}.

\begin{prop}
	\label{prop:codim.eta}
For a Kostant series $m\kp\gamma$ compatible with $\cQ$, we have
\begin{equation}
	\label{eqn:codim.strata}
\pushQED{\qed}
\codim_\C \left(\eta_m;\Rep_\gamma(Q)\right) = \sum_{j=1}^\ell \codim_\C \left(\Omega_{m^j};\Rep_{\gamma^j}(Q^j)\right).
\qedhere
\popQED
\end{equation}
\end{prop}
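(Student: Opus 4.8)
The plan is to realize $\eta_m$ as a Cartesian product of the subquiver orbits $\Omega_{m^j}$ together with a single affine factor coming from the arrows of $Q$ lying in no subquiver of $\cQ$, and then to appeal to additivity of complex dimension under products.

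First I would partition the arrows. Set $R = Q_1 \setminus \Union_{j=1}^\ell Q^j_1$, so that $Q_1 = \left(\bigsqcup_{j=1}^\ell Q^j_1\right) \sqcup R$ as a disjoint union. Since the vertex sets $Q^1_0,\ldots,Q^\ell_0$ are pairwise disjoint and every arrow $a\in Q^j_1$ satisfies $ta,ha \in Q^j_0$, the summand $\Hom(V_{ta},V_{ha})$ attached to such an arrow is determined by the restricted dimension vector $\gamma^j$ alone. Regrouping the direct sum defining $\Rep_\gamma(Q)$ along this partition therefore yields a canonical linear isomorphism
\[
\Rep_\gamma(Q) \;\iso\; \left(\prod_{j=1}^\ell \Rep_{\gamma^j}(Q^j)\right) \times \prod_{a\in R} \Hom(V_{ta},V_{ha}).
\]

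Second, I would read off $\eta_m$ in these coordinates. By the defining condition \eqref{eqn:quiver.strata.defn}, a tuple $(x_a)_{a\in Q_1}$ lies in $\eta_m$ exactly when its $Q^j_1$-block lies in $\Omega_{m^j}$ for every $j$, while its $R$-block is completely unconstrained. Hence under the isomorphism above
\[
\eta_m \;\iso\; \left(\prod_{j=1}^\ell \Omega_{m^j}\right) \times \prod_{a\in R}\Hom(V_{ta},V_{ha}),
\]
a product of smooth locally closed subvarieties. Taking $\dim_\C$ of both the ambient space and of $\eta_m$ in this product form, applying $\dim_\C(X\times Y) = \dim_\C X + \dim_\C Y$, and subtracting, the contribution of the factor $\prod_{a\in R}\Hom(V_{ta},V_{ha})$ cancels, since it appears in full on both sides and thus has codimension zero, and one is left with $\sum_{j=1}^\ell \codim_\C(\Omega_{m^j};\Rep_{\gamma^j}(Q^j))$, which is \eqref{eqn:codim.strata}.

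There is essentially no obstacle here: the statement is a bookkeeping consequence of the definitions, as the surrounding text already indicates. The only point deserving a moment's care is the product decomposition itself — specifically, that disjointness of the subquiver vertex sets is exactly what makes the $\Rep_{\gamma^j}(Q^j)$ independent blocks of coordinates on $\Rep_\gamma(Q)$, and that the arrows in $R$ carry no rank or transversality constraints and hence contribute nothing to the codimension. Everything else is the routine additivity of codimension under Cartesian products of varieties.
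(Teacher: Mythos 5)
Your proof is correct and is exactly the argument the paper has in mind: the paper gives no written proof, asserting only that the result ``follows immediately from the definitions'' because no conditions are imposed along the arrows outside $\Union_j Q^j_1$, and your product decomposition of $\Rep_\gamma(Q)$ and of $\eta_m$ is precisely the bookkeeping that justifies that assertion.
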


\subsection{Ordering Roots}
	\label{ss:order.roots}

 As in \cite{mr2001,mr2003,rr2013,jarr2018}, there exists a total ordering (not unique) for the positive roots $\beta_u^j \in \Phi^j$, for $1\leq u \leq r_j$
\begin{equation}
\label{eqn:pos.roots.order.within}
\beta^j_1 \prec \cdots \prec \beta^j_{r_j}
\end{equation}
satisfying the condition \[u<v \implies \lambda_{Q^j_1}(\beta^j_u,\beta^j_v) \geq 0.\] This ordering was originally described in terms of homological properties of indecomposable quiver representations, see e.g.~\cite[Section~6.2]{mr2010} and \cite[Section~4]{rr2013}; the equivalence with the conditions on $\lambda$ is established in \cite[Lemma~5.1]{jarr2018}.

Now let $r = \sum_{j=1}^\ell r_j$ and $\Phi(Q,\cQ) = \Union_{j=1}^\ell \Phi^j$, so $r = |\Phi(Q,\cQ)|$. In the remainder of the subsection, we show that for certain choices of $\cQ$, we can splice together the above orderings into a total ordering on the roots $\phi_u\in\Phi(Q,\cQ)$
\begin{equation}
\label{eqn:pos.roots.order.total}
\phi_1 \prec \cdots \prec \phi_r
\end{equation}
such that, after writing $\phi_u = \beta^j_k$ and $\phi_v = \beta^{j'}_{k'}$ for appropriate $j$, $j'$, $k$ and $k'$, the following conditions are satisfied:
\begin{subequations}
\label{eqn:order.rules}
	\begin{gather}
			\text{$j=j'$ and $u<v$}  \implies \lambda_{Q^j_1}(\phi_u,\phi_v) \geq 0 \text{~and~}
				\lambda_{Q_1 \setminus Q^j_1}(\phi_u,\phi_v) \leq 0; 		
					\label{eqn:order.rule.within.technical}
			\\
			\text{$j\neq j'$ and $u<v$}  \implies \lambda(\phi_u,\phi_v) \leq 0. 
					\label{eqn:order.rule.without}
	\end{gather}
\end{subequations}
The first condition ensures that the ordering rules within subquivers from \eqref{eqn:pos.roots.order.within} are preserved. Next, we describe the necessary hypotheses on the subquiver partition $\cQ$ for such an order to exist.

Given $\cQ \dsqp Q$ we define a new quiver, denoted $P(Q,\cQ)$, to be the quiver obtained from $Q$ by contracting each subquiver $Q^j \in \cQ$ to a single vertex. The resulting vertices will be labeled $Q^1,\ldots, Q^\ell$ in the quiver $P(Q,\cQ)$. Observe that the arrows $P(Q,\cQ)_1$ are identified with the set $Q_1\setminus \Union_{1\leq j \leq \ell} Q^j_1$. If $P(Q,\cQ)$ is acyclic,  then we will say that $\cQ$ is \emph{admissible}. In particular, observe that if $P(Q,\cQ)$ has no loops (aka 1-cycles), then for each $j$ we have $\lambda_{Q^j} = \lambda$ and the ordering rule \eqref{eqn:order.rule.within.technical} simplifies to
\[
\text{$j=j'$ and $u<v$}  \implies \lambda(\phi_u,\phi_v) \geq 0.
\]
Hence, for admissible $\cQ \dsqp Q$ we have the ordering rules for roots $\phi_u = \beta^j_k$ and $\phi_v = \beta^{j'}_{k'}$ as follows
\begin{equation}
	\label{eqn:order.rules.admissible}
	\begin{split}
	\text{$j=j'$ and $u<v$} & \implies \lambda(\phi_u,\phi_v) \geq 0; \\
	\text{$j \neq j'$ and $u<v$} & \implies \lambda(\phi_u,\phi_v) \leq 0.	
	\end{split}	
\end{equation}
If the subquivers $Q^j \in \cQ$ are labeled such that for every arrow $a\in P(Q,\cQ)_1$ we have $ha = Q^j$ and $ta=Q^{j'}$ with $j<j'$, then $\cQ$ is called \emph{ordered}. Every admissible $\cQ$ can be ordered since, in this case, $P(Q,\cQ)$ is acyclic (we have already used that acyclic quivers admit such a ``head before tail'' ordering on vertices, see Section \ref{ss:quivers.representations}).

\begin{thm}
	\label{thm:total.order.exists}
A total ordering \eqref{eqn:pos.roots.order.total} satisfying \eqref{eqn:order.rules.admissible} exists (but is not unique) if and only if $\cQ \dsqp Q$ is admissible. In particular, if $\cQ$ is ordered then an allowed total order on $\Phi(Q,\cQ)$ is
\begin{equation}
	\label{eqn:our.order.exists}
\Phi^1 \prec \cdots \prec \Phi^\ell,
\end{equation}
where each $\Phi^j$ is ordered according to the first rule in \eqref{eqn:order.rules.admissible}.
\end{thm}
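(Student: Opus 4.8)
The plan is to prove the two implications separately, the forward one (``admissible $\Rightarrow$ order exists'') being essentially the explicit construction in the ``in particular'' clause, and the converse being where the real work lies.

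\emph{Admissible implies a valid order exists.} Suppose $\cQ$ is admissible, so $P(Q,\cQ)$ is acyclic; as already noted we may relabel so that $\cQ$ is ordered. Fix, for each $j$, any order \eqref{eqn:pos.roots.order.within} on $\Phi^j$ (one exists by the cited sources) and declare the concatenation \eqref{eqn:our.order.exists}: every root of $\Phi^j$ precedes every root of $\Phi^{j'}$ when $j<j'$, and inside each $\Phi^j$ we keep the chosen order. To verify \eqref{eqn:order.rules.admissible} (which coincides with \eqref{eqn:order.rules} here, $P$ having no loops), consider two cases. For roots in the same $\Phi^j$: since the $Q^k$ are pairwise vertex-disjoint and $P$ has no loops, every arrow of $Q$ with both endpoints in $Q^j_0$ lies in $Q^j_1$, so by $\N$-bilinearity $\lambda$ and $\lambda_{Q^j_1}$ agree on dimension vectors supported in $Q^j_0$, and the required inequality is exactly the defining property \eqref{eqn:pos.roots.order.within} of the chosen order on $\Phi^j$. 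For roots $\phi_u=\beta^j_k$, $\phi_v=\beta^{j'}_{k'}$ with $j<j'$: expand $\lambda(\phi_u,\phi_v)=\sum_{i\in Q^j_0,\,i'\in Q^{j'}_0}d^i_k d^{i'}_{k'}\,\lambda(e_i,e_{i'})$; every arrow of $Q$ between $Q^j_0$ and $Q^{j'}_0$ is an arrow of $P$ between the vertices $Q^j$ and $Q^{j'}$, and since $\cQ$ is ordered it has head in $Q^j_0$ and tail in $Q^{j'}_0$, so each $\lambda(e_i,e_{i'})\leq 0$ and hence $\lambda(\phi_u,\phi_v)\leq 0$. Non-uniqueness follows from the non-uniqueness of the within-block orders \eqref{eqn:pos.roots.order.within}.

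\emph{A valid order implies admissible.} Argue by contraposition: if $\cQ$ is not admissible, $P(Q,\cQ)$ carries an oriented cycle; fix a \emph{shortest} one $Q^{j_1}\to\cdots\to Q^{j_p}\to Q^{j_1}$, with realizing arrows $a_s$ of $Q$ having tail $e_s\in Q^{j_s}_0$ and head $n_{s+1}\in Q^{j_{s+1}}_0$ (indices mod $p$). I will produce a cycle $\phi\prec\cdots\prec\phi$ of relations forced by \eqref{eqn:order.rules}, which is impossible for a total order. For each $s$ let $\rho_s\in\Phi^{j_s}$ be the sum of the simple roots indexed by the vertices on the (unique) path in the tree $Q^{j_s}$ joining $n_s$ and $e_s$; this is the highest root of a type-$A$ parabolic subsystem, hence a positive root of $Q^{j_s}$, with coefficient $1$ at both $n_s$ and $e_s$. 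Assume first $p\geq 3$. Minimality forbids any arrow of $P$ from $Q^{j_s}$ to $Q^{j_{s-1}}$ (it would produce a $2$-cycle), so every arrow of $Q$ between these blocks' vertex sets runs from $Q^{j_{s-1}}_0$ to $Q^{j_s}_0$; thus $\lambda(e_i,e_{i'})\leq 0$ for all $i\in Q^{j_s}_0$, $i'\in Q^{j_{s-1}}_0$, while $a_{s-1}$ makes the $(n_s,e_{s-1})$-term at most $-1$. By $\N$-bilinearity $\lambda(\rho_s,\rho_{s-1})\leq -1<0$, so rule \eqref{eqn:order.rule.without} forces $\rho_s\prec\rho_{s-1}$ for every $s$; chaining around the cycle yields $\rho_p\prec\rho_{p-1}\prec\cdots\prec\rho_1\prec\rho_p$, a contradiction. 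The short cases need separate treatment in the same spirit: for $p=2$ one runs the same argument after a brief analysis of the orientations of $Q^{j_1},Q^{j_2}$ near their incidence vertices, choosing roots whose two competing forced inequalities point the wrong way; and for $p=1$ — a loop of $P$ at some $Q^j$ coming from an arrow $c\notin Q^j_1$ with $tc,hc\in Q^j_0$ — one uses the refinement $\lambda_{Q_1\setminus Q^j_1}(\phi_u,\phi_v)\leq 0$ in \eqref{eqn:order.rule.within.technical}: since $\lambda_{Q_1\setminus Q^j_1}(e_{tc},e_{hc})\geq 1$ while the internal arrows of $Q^j$ along the path joining $tc$ and $hc$ supply a chain of $\lambda_{Q^j_1}$-forced inequalities, suitable roots of $\Phi^j$ again form a contradictory cycle.

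The hard part is this converse, and within it the combinatorial matching of roots to blocks so that the forced relations genuinely chain around the $P$-cycle. The naive choice of simple roots at the incidence vertices can fail, since a Dynkin block may be oriented so that $\alpha_{e_s}$ is maximal, or $\alpha_{n_s}$ minimal, for the relations internal to $\Phi^{j_s}$ — leaving no way to ``pass through'' that block; using the highest roots of the joining path-subquivers, and using minimality of the cycle to pin down the signs of the cross terms of $\lambda(\rho_s,\rho_{s-1})$, is what makes the argument go through. The degenerate lengths $p=1,2$ are genuinely special: $p=1$ is precisely where the extra $\lambda_{Q_1\setminus Q^j_1}$ clause of \eqref{eqn:order.rule.within.technical} is used, and $p=2$ needs a short orientation case check, so both are best dispatched before the general $p\geq 3$ argument.
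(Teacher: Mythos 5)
Your proposal follows the same strategy as the paper's own proof: the forward direction is the explicit concatenated order \eqref{eqn:our.order.exists}, verified block-by-block via \eqref{eqn:lambda.trees} and bilinearity of $\lambda$, and the converse is a case analysis on the length of an oriented cycle in $P(Q,\cQ)$ that produces a violation of trichotomy. The difference is purely in where the detail lands, and it is exactly complementary. The paper works out the cases $k=1$ and $k=2$ concretely and only sketches $k\geq 3$ (``compare the longest roots of the subquivers in the cycle''), whereas you fully work out $p\geq 3$ --- your use of a \emph{shortest} cycle to pin down the sign of every cross term of $\lambda(\rho_s,\rho_{s-1})$, and your choice of the path-roots $\rho_s$ with coefficient $1$ at the entry and exit vertices, make that case genuinely rigorous --- but you defer $p=1$ and $p=2$ to sketches. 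Your sketches do name the correct mechanisms (for $p=1$, the $\lambda_{Q_1\setminus Q^j_1}$ clause of \eqref{eqn:order.rule.within.technical}; for $p=2$, one within-block forced inequality colliding with two cross-block ones), which is precisely what the paper does with its roots $\beta,\beta'$ and $\beta'_1,\beta'_2,\beta''$. One caution: for $p=2$ the phrase ``one runs the same argument'' is not accurate --- the chain of $\rho_s$'s breaks down there because arrows run in both directions between the two blocks, so the sign of $\lambda(\rho_1,\rho_2)$ is undetermined; one genuinely needs acyclicity of $Q$ to force a ``rightward'' internal arrow in one of the blocks and then to invoke the within-block rule \eqref{eqn:order.rule.within.technical} against the cross-block rule \eqref{eqn:order.rule.without}, as the paper does. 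If you write up $p=1$ and $p=2$ in full, your argument is complete and, in the $p\geq 3$ case, more detailed than the published one.
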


\begin{proof}
Suppose that $\cQ$ is admissible and ordered. Since every arrow $a\in P(Q,\cQ)_1$ satisfies $j<j'$ where $ha = Q^j$, $ta = Q^{j'}$, we have, by repeated application of \eqref{eqn:lambda.trees}, that $\lambda(\beta^j_u,\beta^{j'}_v) \leq 0$ for every $1\leq u \leq r_j$ and $1\leq v \leq r_{j'}$. When $\cQ$ is ordered, $\beta^j_u$ will appear before $\beta^{j'}_v$ and we have proven \eqref{eqn:our.order.exists} is an allowed total ordering.

Conversely suppose that $P(Q,\cQ)$ has a $k$-cycle. We will treat the cases $k=1$, $k=2$, and $k\geq 3$ separately. For $k=1$, without loss of generality we assume that there exists a loop at the vertex $Q^1\in P(Q,\cQ)_0$. This loop comes from an arrow $a\in Q_1 \setminus \Union_{j} Q^j$. There must be a subquiver of type $A_p$ in $Q^1$ having endpoint vertices $ta$ and $ha$ as depicted below:
\begin{center}
	\begin{tikzpicture}
	\node (ta) at (0,0) {$ta$};
	\node (ha) at (4,0) {$ha$};
	\node (alabel) at (2,1.1) {$a$};
	\node (i) at (1.3,0) {$i$};
	\node (i') at (2.7,0) {$i'$};
	\node (a'label) at (2,-.18) {$a'$};
	\node [red] at (5,0) {$A_p$};
	
	\draw [->,domain=0:4] plot (\x, {.3+.6*\x-0.15*\x*\x)});
	\draw [dashed] (ta) -- (i);
	\draw [->] (i) -- (i');
	\draw [dashed] (i') -- (ha);
	
	\draw [rounded corners,red] (-.5,0) -- (-.5,.4) -- (4.5,.4) -- (4.5,-.4) -- (-.5,-.4) -- (-.5,0);
	\end{tikzpicture}
\end{center}
where, as in our drawing, at least some arrow $a'\in (A_p)_1$ points to the right and $p>1$ or else $Q$ is not acyclic. Let $\beta$ denote the root of $A_p$ corresponding to the interval $[ta, i]$ and let $\beta'$ denote the root of $A_p$ corresponding to the interval $[i', ha]$. Both $\beta$ and $\beta'$ must also be roots of $Q^1$. Now, $\lambda_{Q^1}(\beta,\beta')>0$ and $\lambda_{Q_1\setminus Q^1_1} (\beta,\beta')>0$ and hence $\beta$ and $\beta'$ must violate the conditions of \eqref{eqn:order.rule.within.technical}.

If $k=2$, then without loss of generality we assume a 2-cycle between the subquivers $Q^1$ and $Q^2$. We call the arrows forming the 2-cycle $a$ and $b$, and we must have the following picture as a subgraph of $Q$
\begin{center}
	\begin{tikzpicture}
	\node (ta) at (0,1.5) {$ta$};
	\node (ha) at (0,0) {$ha$};
	\node (tb) at (4,0) {$tb$};
	\node (hb) at (4,1.5) {$hb$};
	\node (i) at (1.3,1.5) {$i$};
	\node (i') at (2.7,1.5) {$i'$};
	\node (alab) at (-.3,.75) {$a$};
	\node (blab) at (4.3,.75) {$b$};
	
	\draw [->] (ta) -- (ha);
	\draw [->] (tb) -- (hb);
	\draw [dashed] (ta) -- (i);
	\draw [dashed] (i') -- (hb);
	\draw [dashed] (ha) -- (tb);
	\draw [->] (i) -- (i');
	
	\draw[rounded corners,red] (-.5,0) -- (-.5,.4) -- (4.5,.4) -- (4.5,-.4) -- (-.5,-.4) -- (-.5,0);
	\draw[rounded corners,red] (-.5,1.5) -- (-.5,1.9) -- (4.5,1.9) -- (4.5,1.1) -- (-.5,1.1) -- (-.5,1.5);
	
	\node [red] at (5.35,1.5) {$Q'$};
	\node [red] at (5.35,0) {$Q''$};
\end{tikzpicture}
\end{center} 
where $Q'$ is a type $A$ subquiver of $Q^1$ with endpoints $ta$ and $hb$, while $Q''$ is a type $A$ subquiver of $Q^2$ with endpoints $ha$ and $tb$. Moreover, we have assumed without loss of generality that not both of $Q'$ and $Q''$ are $A_1$ and that there must be at least one rightward arrow in $Q'_1$ (or one leftward arrow in $Q''_1$), or else $Q$ is not acyclic. Now let $\beta'_1$, $\beta'_2$, and $\beta''$ denote the roots which correspond respectively to the intervals $[ta,i]$, $[i',hb]$, and $[ha,tb]$. To satisfy \eqref{eqn:order.rule.within.technical}, we need $\beta'_1 \prec \beta'_2$. On the other hand, \eqref{eqn:order.rule.without} implies $\beta''\prec \beta'_1$ and $\beta'_2\prec \beta''$ since $\lambda(\beta'', \beta'_1)$ and $\lambda(\beta'_2, \beta'')$ are both negative. But this gives a contradiction since trichotomy then implies $\beta'_2 \prec \beta'_1$.

In the case $k\geq 3$, we can find a contradiction to trichotomy in an ordering by comparing the longest roots in each of the subquivers making up the existing $k$-cycle.
\end{proof}

\begin{example}
	\label{ex:ordering}
Again, consider the quiver $1\leftarrow 2 \leftarrow 3$ with Dynkin subquiver partition $\cQ = \{ 1, 2\leftarrow 3\}$ as in Example \ref{ex:subquiv.part}. Then $P(Q,\cQ)$ is the acyclic quiver $Q^1 \leftarrow Q^2$. Further, observe that the ordering implied in Example \ref{ex:subquiv.part} verifies Theorem \ref{thm:total.order.exists}. In particular, setting
\begin{align*}
\phi_1 = \beta^1_1 &= \alpha_1 & \phi_2 = \beta^2_1 &= \alpha_3 & \phi_3 = \beta^2_2 &= \alpha_2 + \alpha_3 & \phi_4 = \beta^2_3 &= \alpha_2
\end{align*}
gives an allowed total ordering $\phi_1 \prec \phi_2 \prec \phi_3 \prec \phi_4$.
\end{example}

\begin{example}
	\label{ex:order.trees}
Observe more generally that if $Q$ is an orientation of a tree, then every Dynkin subquiver partition $\cQ$ is automatically admissible. In particular, this applies to every Dynkin quiver.
\end{example}

\begin{example}
	\label{ex:order.counterexamples}
Consider now the acyclic orientation of $Q=\til{A}_2$:
\[
	\begin{diagram}[height=1.5em,width=1.5em]
	1 	&		&\lTo	&		& 2	\\
		&\luTo	&		&\ruTo	& 	\\
		&		& 3		&		&
	\end{diagram}
\]
Several different Dynkin subquiver partitions of $Q$ illustrate the technicalities in our definitions and proof of Theorem \ref{thm:total.order.exists}. First, the subquiver partition $\cQ$ with $Q^1 = \{1\}$ and $Q^2 = \{2 \leftarrow 3\}$ is both admissible and ordered. Indeed, in this case $P(Q,\cQ)$ is the Kronecker quiver $Q^1 \overleftarrow{\leftarrow} Q^2$ which is acyclic and has vertices ordered with ``head before tail'' for each arrow. Hence, an allowed total ordering on the roots $\Phi(Q,\cQ)$ is given by
\begin{align*}
\phi_1 &= \alpha_1 & \phi_2 &= \alpha_3 & \phi_3 &= \alpha_2 + \alpha_3 & \phi_4 &=\alpha_2.
\end{align*}

On the other hand, the Dynkin subquiver partition $Q^1 = \{1 \leftarrow 3 \}$, $Q^2 = \{2\}$ is not admissible since the resulting $P(Q,\cQ)$ quiver is $Q^1 \overrightarrow{\leftarrow} Q^2$ which contains a 2-cycle. Moreover, as in the proof above we can check that the roots $\Phi(Q,\cQ)$ can not be totally ordered according to our rules. Indeed, to satisfy  \eqref{eqn:order.rule.within.technical} we must take
\[
\Phi^1 = \{ \alpha_3 \prec \alpha_1 + \alpha_3 \prec \alpha_1\}
	\quad\text{and}\quad
\Phi^2 = \{\alpha_2\}. 
\]
But then \eqref{eqn:order.rule.without} implies we must have $\alpha_1 \prec \alpha_2$ and $\alpha_2 \prec \alpha_3$, but this contradicts that $\alpha_3 \prec \alpha_1$ in our ordering of $\Phi^1$ above.

Finally, consider the Dynkin subquiver partition $Q^1 = \{1 \leftarrow 2 \leftarrow 3\}$. This is not admissible since $P(Q,\cQ)$ is the so-called Jordan quiver with one vertex $Q^1$ and one loop arrow (aka 1-cycle). Then \eqref{eqn:order.rule.within.technical} cannot be satisfied, say for $\beta' = \alpha_2+\alpha_3$ and $\beta'' = \alpha_1$ since 
\[
\lambda_{Q^1_1}(\beta',\beta'') = 1 > 0, \text{~but also~} \lambda_{Q_1\setminus Q^1_1}(\beta',\beta'') = 1 >0.
\]
We have illustrated how loops and 2-cycles in $P(Q,\cQ)$ violate our desired total orders; however, we postpone a discussion on \emph{why} we want to exclude non-admissible Dynkin subquiver partitions $\cQ$ until Section \ref{s:dilogs.DT} (in particular, see Example \ref{ex:Kronecker.DT}).
\end{example}


\section{Quantum dilogarithms and {P}oincar\'e series}
\label{s:q.dilog}

Let $z$ be an indeterminate. The element $\E(z) \in \Q(q^{1/2})[[z]]$ defined by the formula
\begin{equation}
\label{eqn:E.defn}
\E(z)= 1 + \sum_{k=1}^\infty \frac{(-z)^k q^{k^2/2}}{\prod_{j=1}^k (1-q^j)}
\end{equation}
is called the \emph{quantum dilogarithm series}. Below, we describe its connection to Poincar\'e series of equivariant cohomology algebras.

Given any $\N$-graded $\R$-algebra $\curly{A} = \Dirsum_{j\in \N} \curly{A}_j$, for which every graded piece $\curly{A}_j$ is a finite-dimensional $\R$-vector space, the \emph{Poincar\'e series} of $\curly{A}$ in the variable $q^{1/2}$ is
\[
\cP[\curly{A}] = \sum_{j\in\N} q^{j/2} \dim_\R(A_j).
\]

In the rest of the paper, the equivariant cohomology algebra $H^*_{\GL(k,\C)}(\C^k)$, where $\GL(k,\C)$ acts in the standard way on $\C^k$, will play an important role. Since $\C^k$ is equivariantly contractible, this algebra is isomorphic to $H^*_{\GL(k,\C)}(\text{point})$ which is by definition $H^*(B\GL(k,\C))$. Here, and throughout the sequel, $B$ stands for the \emph{Borel construction} in equivariant cohomology. Since $\GL(k,\C)$ is homotopy equivalent to its maximal compact subgroup $\U(k)$, we further have an isomorphism $H^*(B\GL(k,\C)) \cong H^*(B\U(k))$. These identifications will be used freely and without comment in the sequel.

Set $\cP_{k} = \cP[H^*(B\GL(k,\C))]$. The algebra $H^*(B\GL(k,\C))$ is a polynomial ring in the Chern classes $c_1$, $\ldots$, $c_k$ of $\GL(k,\C)$ with $\deg(c_i)=2i$, all of the odd cohomology groups vanish and we have \[\cP_k = \sum_{j\geq 0} q^j\,\dim_\R(H^{2j}(B\GL(k,\C))),\] from whence we obtain that $\curly P_0 = 1$ and $\cP_k = \prod_{j=1}^k (1-q^j)^{-1}$ for $k>0$. Thus the quantum dilogarithm series can be written as
\begin{equation}
\label{eqn:E.defn.poincare}
\E(z) = \sum_{k\geq 0} (-z)^k q^{k^2/2}\, \cP_{k}.
\end{equation}
We remark that often in the literature, see for example \cite{bk2011,bk2013.fpsac}, the quantum dilogarithm series is instead defined to be
\begin{equation}\label{eqn:KE.defn}
1 + \sum_{k\geq 1} \frac{z^k q^{k^2/2}}{(q^k-1)(q^k-q)\cdots(q^k-q^{k-1})}.
\end{equation}
Note that in the series \eqref{eqn:KE.defn}, the denominators count elements of the group $\GL(k,\mathbb{F}_q)$. The two formulations \eqref{eqn:E.defn} and \eqref{eqn:KE.defn} are images of each other under the involution $q^{1/2} \mapsto -q^{-1/2}$. However, it is more convenient for our purposes to count generators in $H^*(B\GL(k,\C)) \iso H^*(B\U(k))$ and therefore we choose to work with the formulation given by Equation \eqref{eqn:E.defn} and \eqref{eqn:E.defn.poincare}. This is consistent with the conventions in \cite{jarr2018} and \cite{rr2013}, where the techniques are more comparable to the present paper.


\section{Dilogarithm identities, Donaldson--Thomas invariants, and the statement of the main theorem}
\label{s:dilogs.DT}
Reineke proved a quantum dilogarithm factorization for acyclic quivers in \cite{mr2010}. Keller defined the \emph{combinatorial (aka refined) Donaldson--Thomas (DT) invariant of $Q$}, denoted $\E_Q$, to be the common value of the resulting product \cite{bk2011,bk2013.fpsac}. The application of Reineke's work to Dynkin quivers, see \cite[Section~6.2]{mr2010}, was interpreted as a dimension count for Dynkin quiver orbits by Rim{\'a}nyi \cite[Theorem~6.1]{rr2013}. We state this result below.

\begin{thm}[\cite{mr2010,rr2013}]
	\label{thm:reineke.rimanyi}
Let $Q = (Q_0,Q_1)$ be a Dynkin quiver with $n$ vertices. Let $\alpha_1, \ldots, \alpha_n$ denote its simple roots (in bijection with the set of vertices $Q_0$), ordered such that for every arrow $j\to i$ in $Q_1$, we have $\alpha_i \prec \alpha_j$ (i.e.~head before tail as in Section \ref{ss:dynkin.quivers}). Let $\beta_1,\ldots,\beta_N$ denote the positive roots ordered as in  \eqref{eqn:pos.roots.order.within}, i.e.~with $\ell=j=1$. Then in the completed quantum algebra $\wh\A_Q$ we have
\begin{equation}
\label{eqn:dilog.ident.extremes}
\pushQED{\qed}
\E(y_{\alpha_1})\E(y_{\alpha_2}) \cdots \E(y_{\alpha_n}) = \E(y_{\beta_1})\E(y_{\beta_2}) \cdots \E(y_{\beta_N}).\qedhere
\popQED
\end{equation}
\end{thm}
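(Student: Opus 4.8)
The plan is to show that both products in \eqref{eqn:dilog.ident.extremes} have the same coefficient on each basis element $y_\gamma$ of $\wh\A_Q$, and hence coincide (each then being Keller's invariant $\E_Q$). In the language of the present paper the left-hand product is the ``all singletons'' case of the construction to come and the right-hand product is the ``single subquiver'' case $\cQ=\{Q\}$, so Theorem~\ref{thm:reineke.rimanyi} is the pair of extreme instances of Theorem~\ref{thm:main}, and the argument below is the prototype for the general one.

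Fix $\gamma\in D_Q$ and extract the coefficient of $y_\gamma$ from each side using the Poincar\'e form \eqref{eqn:E.defn.poincare} together with the commutation relations \eqref{eqn:qalg.rel} and \eqref{eqn:qalg.comm}. On the left, $\alpha_i$ is the dimension vector $e_i$ and the $e_i$ are independent in $D_Q$, so only the term with exponent $\gamma(i)$ at vertex $i$ survives: the coefficient of $y_\gamma$ is a single monomial, a sign times a power of $q$ times $\prod_{i\in Q_0}\cP_{\gamma(i)}$, where the sign and the power of $q$ record the cost of reassembling $y_{e_1}^{\gamma(1)}\cdots y_{e_n}^{\gamma(n)}$ into $y_\gamma$; this is where the head-before-tail vertex order enters, via $\lambda(e_i,e_j)\le 0$ for $i<j$. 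On the right the exponents range over all Kostant partitions $m\kp\gamma$, and the coefficient of $y_\gamma$ is a sum over such $m$ of a sign times a power of $q$ times $\prod_{v=1}^{N}\cP_{m_v}$, the reassembly of $y_{\beta_1}^{m_1}\cdots y_{\beta_N}^{m_N}$ into $y_\gamma$ now controlled by the ordering \eqref{eqn:pos.roots.order.within}, $\lambda(\beta_u,\beta_v)\ge 0$ for $u<v$. A direct bookkeeping with $\lambda$, the Euler form $\chi$, and the $q^{k^2/2}$ factors --- using that $\chi(\beta,\beta)=1$ for every positive root of a Dynkin quiver --- identifies the common sign and $\gamma$-dependent power of $q$ on the two sides, and reduces the theorem to the Poincar\'e-series identity
\[
\prod_{i\in Q_0}\cP_{\gamma(i)} \;=\; \sum_{m\kp\gamma} q^{\codim_\C(\Omega_m;\Rep_\gamma(Q))}\,\prod_{v=1}^{N}\cP_{m_v}.
\]

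I would then prove this identity geometrically. Its left side is $\cP[H^*_{\G_\gamma}(\Rep_\gamma(Q))]$, since $\Rep_\gamma(Q)$ is $\G_\gamma$-equivariantly contractible and $H^*(B\G_\gamma)\cong\bigotimes_{i\in Q_0}H^*(B\GL(\gamma(i),\C))$. By Gabriel's theorem $\Rep_\gamma(Q)=\bigsqcup_{m\kp\gamma}\Omega_m$ is a finite stratification into $\G_\gamma$-orbits; each $\Omega_m\cong\G_\gamma/\G_{\Omega_m}$, so $H^*_{\G_\gamma}(\Omega_m)\cong H^*(B\G_{\Omega_m})$ has Poincar\'e series $\prod_v\cP_{m_v}$ by Proposition~\ref{prop:FR}. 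Ordering the orbits by non-decreasing codimension --- a refinement of the closure order --- I would build $\Rep_\gamma(Q)$ one orbit at a time; at each step the new orbit sits as a smooth closed submanifold with $\G_\gamma$-equivariant complex normal bundle inside the current open union, so the equivariant Thom isomorphism and the long exact sequence of the pair express the Poincar\'e series of the enlarged union as that of the old union plus $q^{\codim_\C}$ times $\prod_v\cP_{m_v}$, provided each such long exact sequence splits. Splitting holds because $H^*(B\GL(k,\C))\cong H^*(B\U(k))$ and every $H^*(B\G_{\Omega_m})$ are concentrated in even degrees while all strata have even real codimension, so each connecting map vanishes; an induction on the number of strata keeps the partial unions even and feeds the next step, and after all strata are added one reads off the displayed identity.

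The heart of the matter --- and the step I expect to be the real obstacle in a self-contained treatment --- is exactly this degeneration: that the spectral sequence of the orbit stratification collapses, equivalently that the Gysin sequences split, equivalently that the Dynkin quiver orbits are cohomologically pure. In Dynkin type it is forced by the evenness just noted, but it is precisely the feature that must be re-established for the coarser strata $\eta_m$ assembled from several subquivers in Theorem~\ref{thm:main}. As an alternative I would also point to the original and shorter route: Reineke's wall-crossing theorem \cite{mr2010} identifies both products with the Harder--Narasimhan factorization of $\E_Q$ for two opposite generic stability functions, so stability-independence yields \eqref{eqn:dilog.ident.extremes} at once; I would nonetheless carry out the topological argument, since it is the one consonant with the methods of this paper.
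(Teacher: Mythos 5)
Your proposal is correct and takes essentially the same route as the paper: the paper itself quotes Theorem~\ref{thm:reineke.rimanyi} from \cite{mr2010,rr2013} without proof, but your coefficient extraction plus the orbit-stratification Betti-number identity is exactly the $\ell=1$ versus $\ell=n$ specialization of the paper's proof of Theorem~\ref{thm:main}, with your sign/power-of-$q$ bookkeeping being Proposition~\ref{prop:compute.codim} (i.e.\ \cite[Lemma~5.1]{rr2013}) and your Gysin-sequence splitting argument being the degeneration of the Kazarian spectral sequence in Theorem~\ref{thm:KSS.context} and Corollary~\ref{cor:KSS.Betti.identity}.
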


Even in the non-Dynkin (but still acyclic) case, applying Reineke's result to Keller's definition gives
\begin{equation}
	\label{eqn:trivial.factorization}
	\E_Q = \E(y_{e_1})\E(y_{e_2}) \cdots \E(y_{e_n})
\end{equation}
which is the lefthand side of \eqref{eqn:dilog.ident.extremes} in the Dynkin case. We will call \eqref{eqn:trivial.factorization} the \emph{trivial factorization of $\E_Q$}. We now remark on how Theorem \ref{thm:reineke.rimanyi} fits into our present point of view.

Observe that for $\ell=n$, and hence subquivers $Q^i = (\{i\},\emptyset)$, the ``head comes before tail'' ordering on simple roots on the lefthand side of \eqref{eqn:dilog.ident.extremes} is equivalent to \eqref{eqn:pos.roots.order.total}. In particular, we only need to use the second ordering rule from \eqref{eqn:order.rules.admissible}. This applies also to Equation \eqref{eqn:trivial.factorization}.

On the other hand, with $\ell=1$, and hence $Q^1 = Q$, we see that the ordering on the righthand side of \eqref{eqn:dilog.ident.extremes} is also equivalent to \eqref{eqn:pos.roots.order.total}, where this time we need only use the first rule from \eqref{eqn:order.rules.admissible}. 

Hence, the identity \eqref{eqn:dilog.ident.extremes} relates the extremal choices for Dynkin subquiver partitions $\cQ$ of $Q$. This motivates the following factorization formula for the combinatorial DT invariant, which interpolates between these extremes. 

\begin{thm}
\label{thm:main}
Let $Q$ be an acyclic quiver, and let $\cQ\dsqp Q$ be an admissible Dynkin subquiver partition of $Q$. Suppose that the roots $\{\phi_1\prec \cdots \prec\phi_r\} = \Phi(Q,\cQ)$ are ordered according to the rules \eqref{eqn:order.rules.admissible}. We have that
\begin{equation}
\label{eqn:dilog.ident.interpolated}
\E_Q = \E(y_{\phi_1}) \E(y_{\phi_2}) \cdots \E(y_{\phi_r}).
\end{equation}
\end{thm}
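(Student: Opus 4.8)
The plan is to prove \eqref{eqn:dilog.ident.interpolated} by comparing, for each fixed dimension vector $\gamma \in D_Q$, the coefficient of the basis symbol $y_\gamma$ on the two sides, and showing these agree in $\Q(q^{1/2})$; since the $y_\gamma$ form a topological basis of $\wh\A_Q$, this suffices. On the left, Keller's definition together with Reineke's theorem gives the trivial factorization $\E_Q = \E(y_{e_1})\cdots\E(y_{e_n})$ of \eqref{eqn:trivial.factorization}. Expanding each factor by \eqref{eqn:E.defn.poincare} and collapsing the monomials $y_{e_1}^{k_1}\cdots y_{e_n}^{k_n}$ using the relation \eqref{eqn:qalg.rel}, the coefficient of $y_\gamma$ is forced (the $e_i$ being linearly independent) to come from the single choice $k_i = \gamma(i)$, and so equals $\pm q^{c(\gamma)}\prod_{i\in Q_0}\cP_{\gamma(i)}$ for an explicit sign and rational power $q^{c(\gamma)}$ depending only on $\gamma$ and $Q$. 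By Section \ref{s:q.dilog}, $\prod_{i}\cP_{\gamma(i)}$ is exactly the Poincar\'e series of the equivariant cohomology algebra $H^*_{\G_\gamma}(\Rep_\gamma(Q)) \iso H^*(B\G_\gamma)$.

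Next I expand the right-hand side in the same way: writing $\E(y_{\phi_u}) = \sum_{k\geq 0}(-y_{\phi_u})^k q^{k^2/2}\cP_k$ and multiplying out, the coefficient of $y_\gamma$ becomes a sum over the Kostant series $m\kp\gamma$ compatible with $\cQ$, where the multiplicity $m^j_v$ (for $\phi_u = \beta^j_v$) is the exponent of $y_{\phi_u}$ chosen from the $u$-th factor; these tuples are in bijection with such $m$ because the vertex sets $Q^j_0$ partition $Q_0$. Each term contributes $\prod_{j=1}^\ell\prod_{v=1}^{r_j}\cP_{m^j_v}$, up to a sign and a power of $q$ obtained by normalizing $y_{k_1\phi_1}\cdots y_{k_r\phi_r}$ into $y_\gamma$ via \eqref{eqn:qalg.rel}. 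The central computation, carried out entirely in the quantum algebra in Section \ref{s:count.codim}, is that --- \emph{because} $\cQ$ is admissible and the roots are ordered by the rules \eqref{eqn:order.rules.admissible} --- this power is precisely $q^{c(\gamma)+\codim_\C(\eta_m;\Rep_\gamma(Q))}$, with the codimension given by Proposition \ref{prop:codim.eta}, and the accompanying sign matches that on the left. (This is exactly where non-admissibility fails: a cycle in $P(Q,\cQ)$ breaks the identification of the $q$-exponent with an honest codimension, cf.~Examples \ref{ex:order.counterexamples} and \ref{ex:Kronecker.DT}.)

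It then remains to establish the purely topological identity
\[
\cP\!\left[H^*_{\G_\gamma}(\Rep_\gamma(Q))\right] = \sum_{m\kp\gamma} q^{\codim_\C(\eta_m;\,\Rep_\gamma(Q))}\,\cP\!\left[H^*_{\G_\gamma}(\eta_m)\right]
\]
for every $\gamma$, the sum over Kostant series compatible with $\cQ$. Here $\Rep_\gamma(Q) = \Union_{m} \eta_m$ is the $\G_\gamma$-invariant stratification by the quiver strata \eqref{eqn:quiver.strata.defn} (a disjoint union), each $\eta_m$ being smooth and locally closed. Two inputs feed into the identity. First, a generalization of Proposition \ref{prop:FR}: the stabilizer in $\G_\gamma$ of a point of $\eta_m$ is, up to homotopy, $\prod_{j,v}\U(m^j_v)$, so $H^*_{\G_\gamma}(\eta_m) \iso H^*\!\big(B\prod_{j,v}\GL(m^j_v,\C)\big)$ has Poincar\'e series $\prod_{j,v}\cP_{m^j_v}$; combined with \eqref{eqn:codim.strata} this matches the sum from the previous paragraph. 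Second --- and this is the main obstacle --- one must show the spectral sequence of the stratification (the content of Sections \ref{s:reduction.normal.forms} and \ref{s:KSS}) relating $H^*_{\G_\gamma}(\Rep_\gamma(Q))$ to $\Dirsum_{m} H^{*-2\codim_\C\eta_m}_{\G_\gamma}(\eta_m)$ degenerates at its first page, equivalently that all the connecting (Gysin) homomorphisms vanish. This is not formal; the argument will reduce to normal forms to describe the equivariant normal data of each stratum and then exploit that every $H^*_{\G_\gamma}(\eta_m)$ is concentrated in even degrees, so that the long exact sequences obtained by peeling off strata one at a time split at the level of Poincar\'e series. Granting the degeneration, taking Poincar\'e series yields the displayed identity; combining it with the first two paragraphs shows the $y_\gamma$-coefficients on the two sides of \eqref{eqn:dilog.ident.interpolated} coincide for all $\gamma$, proving the theorem.
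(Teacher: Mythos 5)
Your overall strategy coincides with the paper's: extract the coefficient of $\yy^\gamma$ (equivalently of $y_\gamma$) from both sides, use the trivial factorization \eqref{eqn:trivial.factorization} on the left, use the quantum-algebra computation of Section \ref{s:count.codim} (Proposition \ref{prop:compute.codim}) to identify the $q$-exponent on the right as $\codim_\C(\eta_m;\Rep_\gamma(Q))$ plus the universal term $\frac{1}{2}\sum_i\gamma(i)^2$, and close with the stratification identity of Corollary \ref{cor:KSS.Betti.identity}, whose $E_1$-degeneration follows from exactly the parity argument you describe. So the architecture is the paper's.

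One step is wrong as written, however: you assert that the stabilizer in $\G_\gamma$ of a \emph{point} of $\eta_m$ is homotopy equivalent to $\prod_{j,v}\U(m^j_v)$ and deduce $H^*_{\G_\gamma}(\eta_m)\iso H^*\bigl(B\prod_{j,v}\GL(m^j_v,\C)\bigr)$. Neither assertion holds: $\eta_m$ is a union of orbits (infinitely many when $Q$ is not Dynkin), so different points have non-conjugate stabilizers, and the stabilizer of any single point is cut out by \emph{all} arrows, including the connecting arrows $a\in Q_1\setminus\Union_j Q^j_1$, hence is generally strictly smaller than $\prod_{j,v}\U(m^j_v)$ (for $Q=A_2$ with $\cQ=\{\{1\},\{2\}\}$ and $\gamma=(1,1)$, the stabilizer of a nonzero point of $\eta_m=\C$ is the diagonal $\C^*$, not $(\C^*)^2$); moreover $H^*_{\G_\gamma}(\eta_m)$ equals $H^*(B\,\mathrm{Stab}(x))$ only when $\eta_m$ is a single orbit. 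The correct route---which you gesture at but only in the degeneration discussion---is the reduction to the normal locus: define $\nu_m\subseteq\eta_m$ (normal forms on the arrows of each $Q^j$, arbitrary entries on connecting arrows), take its \emph{setwise} isotropy group $\G_{\eta_m}\iso\prod_j\G_{\Omega_{m^j}}\hmtpc\prod_{j,v}\U(m^j_v)$, and apply Lemma \ref{lem:reduce.normal.form} together with the equivariant contractibility of the affine space $\nu_m$ to obtain $H^*_{\G_\gamma}(\eta_m)\iso H^*(B\G_{\eta_m})$ (Propositions \ref{prop:isotropy.strata} and \ref{prop:normal.form.context}). With that substitution your argument is the paper's proof.
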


\noindent This is the main result of our paper; we prove it in Section \ref{s:main.thm.pf}.

\begin{example}
	\label{ex:Kronecker.DT}
Consider the Kronecker quiver $Q$: $1 \overleftarrow{\leftarrow} 2$. If we take the admissible, ordered, Dynkin subquiver partition $Q^1 = \{1\}$ and $Q^2 = \{2\}$ we obtain the trivial factorization
\[
\E_Q = \E(y_{e_1})\E(y_{e_2}).
\]
There are no other admissible $\cQ$ for this quiver. However, consider the non-admissible Dynkin subquiver partition with $Q^1 = \{1 \leftarrow 2\}$ so that $P(Q,\cQ)$ becomes the Jordan quiver with the single vertex $Q^1$ and one loop arrow (the \emph{other} arrow not in $Q^1_1$). We cannot satisfy both conditions in \eqref{eqn:order.rule.within.technical}, note that \eqref{eqn:order.rule.without} is trivially satisfied, but we can set
\begin{align*}
\phi_1 & = e_2 & \phi_2 & = e_1 + e_2 & \phi_3 & = e_1
\end{align*}
to satisfy the first condition of \eqref{eqn:order.rule.within.technical} \emph{inside} $Q^1$. There is a known factorization of $\E_Q$ beginning with $\E(y_{e_2})$, see e.g.~\cite[(1.6)]{bk2011}, but it has infinitely many terms! One complication is that although $e_1 + e_2$ is a root of $Q^1 \iso A_2$, it is not a root of $Q$ since $\chi(e_1+e_2,e_1+e_2) = 0$ (whereas $\chi(\beta,\beta) = 1$ for roots). Notice that the above order $\phi_1 \prec \phi_2 \prec \phi_3$ does naively satisfy \eqref{eqn:order.rules.admissible}, so the notion of \emph{admissibility} and in particular the rules \eqref{eqn:order.rule.within.technical} are necessary to ensure that the corresponding product of quantum dilogarithms is actually $\E_Q$, and not just an arbitrary element of $\wh{\A}_Q$.
\end{example}

\begin{example}
	\label{ex:equi.A3.DT}
Let $Q$ again be the equioriented $A_3$ quiver: $1\leftarrow 2 \leftarrow 3$. Referring to Table \ref{tab:subquiv.example} and ordering roots according to Theorem \ref{thm:total.order.exists} (say, as in Examples \ref{ex:orbit} and \ref{ex:ordering}), we see that Theorem \ref{thm:main} gives the four factorizations
	\begin{align*}
	\E_Q 	& = \E(y_{\alpha_1})\E(y_{\alpha_2})\E(y_{\alpha_3}) \\
			& = \E(y_{\alpha_1})\E(y_{\alpha_3})\E(y_{\alpha_2 + \alpha_3})\E(y_{\alpha_2}) \\
			& = \E(y_{\alpha_2})\E(y_{\alpha_1+\alpha_2})\E(y_{\alpha_1})\E(y_{\alpha_3}) \\
			& = \E(y_{\alpha_3})\E(y_{\alpha_2+\alpha_3})\E(y_{\alpha_2})
				\E(y_{\alpha_1+\alpha_2+\alpha_3})\E(y_{\alpha_1+\alpha_2})\E(y_{\alpha_1}).
	\end{align*}
The first line and last line are the two sides of Theorem \ref{thm:reineke.rimanyi}. We comment that the second and third lines can be obtained from the first through application of the quantum Pentagon Identity (see e.g.~\cite[Theorem~1.2]{bk2011}), but in the sequel, we establish these identities explicitly through geometric and topological methods.
\end{example}

\begin{remark}
Recall that the quantum dilogarithm $\E(z)$ is a power series whose coefficients are (up to a power of $q$) the rational functions $\cP_k = \prod_{1\leq j \leq k} (1-q^j)^{-1}$. Hence, the content of Theorem \ref{thm:main} comprises infinitely many identities among $q$-series. For example, in the case of $Q=A_2$, comparing coefficients of $y_{\alpha_1}^2 y_{\alpha_2}^2$ on both sides of Equation \eqref{eqn:dilog.ident.extremes} amounts to the identity
\begin{equation}
	\label{eqn:q-series.identity}
\frac{1}{(1-q)^2(1-q^2)^2} = \frac{1}{(1-q)(1-q^2)} + \frac{q}{(1-q)^3}+\frac{q^4}{(1-q)^2(1-q^2)^2}.
\end{equation}
There is such an identity associated to each $y_{\alpha_1}^{\gamma(1)} y_{\alpha_2}^{\gamma(2)}$ for every choice of $\gamma(1)$ and $\gamma(2)$. In addition to being the Poincar\'e series of the algebra $H^*(B\GL(k,\C))$, the $q$-series $\cP_n$ is also a generating function for counting partitions; i.e. 
\[
\cP_k = \frac{1}{\prod_{j=1}^k (1-q^j)} = \sum_{n\geq 0} \pi(n;k)\,q^n
\]
where $\pi(n;k)$ denotes the number of partitions of the number $n$ using only parts $1,2,\ldots,k$ (or equivalently by taking the transpose of a Ferrer's diagram, with at most $k$ parts). Hence, one point of view on quantum dilogarithm identities like \eqref{eqn:dilog.ident.interpolated} is that they encode infinitely many identities among partition generating functions, e.g.~\eqref{eqn:q-series.identity}, and hence infinitely many partition counting identities. Thus one interesting problem is to mine the quantum dilogarithm identities for new partition counting identities. Some initial progress appeared in \cite{rraway2018}, where a purely combinatorial partition argument produces a new proof of the extremal version (i.e.~Theorem~\ref{thm:reineke.rimanyi}) of the quantum dilogarithm identity \eqref{eqn:dilog.ident.extremes} in the case of $A$-type Dynkin quivers.
\end{remark}


\section{Codimensions of quiver strata from the quantum algebra}
\label{s:count.codim}

The goal of this section is to perform an important calculation in the quantum algebra which will produce the codimensions of the quiver strata $\eta_m$. In particular, we prove the following generalization of \cite[Lemma~5.1]{rr2013}.

\begin{prop}
	\label{prop:compute.codim}
Fix a dimension vector $\gamma \in D_Q$ and let $m\kp \gamma$ be a Kostant series compatible with the admissible Dynkin subquiver partition $\cQ = \{Q^1,\ldots,Q^\ell\}$. Consider the product \[Y_m = y_{\phi_1}^{m_1} y_{\phi_2}^{m_2} \cdots y_{\phi_r}^{m_r} \in \A_Q.\]
We have
\begin{equation}
\label{eqn:positives.to.simples}
Y_m = (-1)^{s_m} \cdot q^{w_m} \cdot y_{e_1}^{\gamma(1)}\cdots y_{e_n}^{\gamma(n)}
\end{equation}
where
\begin{subequations}
\label{eqn:sm.wm.formulas}
\begin{gather}
	\label{eqn:sm.formula}
	s_m = \sum_{u=1}^r m_u\left(\sum_{i\in Q_0} d^i_u -1\right) \\
	\label{eqn:wm.formula}
	w_m = \codim_\C\left(\eta_m;\Rep_\gamma(Q)\right) 
		+ \frac{1}{2}\sum_{i\in Q_0} \gamma(i)^2 
		- \frac{1}{2}\sum_{u=1}^r m_u^2.
\end{gather}
\end{subequations}
\end{prop}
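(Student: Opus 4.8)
The plan is to evaluate $Y_m$ inside $\A_Q$ in two moves: first collapse the whole product into the single symbol $y_\gamma$ using the relation \eqref{eqn:qalg.rel}, then re-expand $y_\gamma$ as $y_{e_1}^{\gamma(1)}\cdots y_{e_n}^{\gamma(n)}$; the accumulated power of $q$ is then rewritten in terms of the Euler form $\chi$ until it matches \eqref{eqn:wm.formula}. One remark gets used throughout: since $\cQ$ is admissible, $P(Q,\cQ)$ has no loops, so no arrow of $Q_1\setminus Q^j_1$ has both endpoints in $Q^j_0$; hence $\chi$ restricts on each sublattice $D^j\subseteq D_Q$ to the Euler form $\chi_{Q^j}$ of the Dynkin quiver $Q^j$, and in particular $\chi(\phi,\phi)=1$ for every root $\phi\in\Phi^j$.

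\emph{Collapsing and re-expanding.} From \eqref{eqn:qalg.rel} one has $y_{\delta_1}y_{\delta_2}=-q^{\frac12\lambda(\delta_1,\delta_2)}y_{\delta_1+\delta_2}$, and by induction (using associativity) $y_{\delta_1}\cdots y_{\delta_k}=(-1)^{k-1}q^{\frac12\sum_{a<b}\lambda(\delta_a,\delta_b)}y_{\delta_1+\cdots+\delta_k}$ for any dimension vectors $\delta_i$. Applied to the length-$L$ word $y_{\phi_1}^{m_1}\cdots y_{\phi_r}^{m_r}$ with $L=\sum_u m_u$, and dropping the vanishing terms $\lambda(\phi_u,\phi_u)=0$, this gives $Y_m=(-1)^{L-1}q^{P}y_\gamma$ with $P=\tfrac12\sum_{u<u'}m_u m_{u'}\lambda(\phi_u,\phi_{u'})$; applied to $y_{e_1}^{\gamma(1)}\cdots y_{e_n}^{\gamma(n)}$ (length $G=\sum_i\gamma(i)$) and inverted, it gives $y_\gamma=(-1)^{G-1}q^{-R}y_{e_1}^{\gamma(1)}\cdots y_{e_n}^{\gamma(n)}$ with $R=\tfrac12\sum_{i<j}\gamma(i)\gamma(j)\lambda(e_i,e_j)$. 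Hence $Y_m=(-1)^{L+G}q^{P-R}y_{e_1}^{\gamma(1)}\cdots y_{e_n}^{\gamma(n)}$; since $L+G=\sum_u m_u(1+\sum_i d^i_u)$ differs from $s_m$ by the even number $2\sum_u m_u$, the sign is $(-1)^{s_m}$, and it remains to identify $w_m=P-R$.

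\emph{Rewriting the exponent via $\chi$.} Because the vertices are ordered head before tail, $\chi(e_i,e_j)=0$ for $i<j$ and $\chi(e_i,e_i)=1$; expanding $\chi(\gamma,\gamma)$ bilinearly and using $\lambda(\gamma_1,\gamma_2)=\chi(\gamma_2,\gamma_1)-\chi(\gamma_1,\gamma_2)$ gives $-R=\tfrac12\sum_i\gamma(i)^2-\tfrac12\chi(\gamma,\gamma)$. The identical computation with $\gamma=\sum_u m_u\phi_u$, now using $\chi(\phi_u,\phi_u)=1$, gives $P=\tfrac12\chi(\gamma,\gamma)-\tfrac12\sum_u m_u^2-\sum_{u<u'}m_u m_{u'}\chi(\phi_u,\phi_{u'})$. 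Adding, $w_m=P-R=\tfrac12\sum_i\gamma(i)^2-\tfrac12\sum_u m_u^2-\sum_{u<u'}m_u m_{u'}\chi(\phi_u,\phi_{u'})$, so in view of \eqref{eqn:wm.formula} the proposition reduces to the geometric identity $\codim_\C(\eta_m;\Rep_\gamma(Q))=-\sum_{u<u'}m_u m_{u'}\chi(\phi_u,\phi_{u'})$.

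\emph{The codimension identity (the main obstacle).} By Proposition \ref{prop:codim.eta} the left side equals $\sum_{j=1}^\ell\codim_\C(\Omega_{m^j};\Rep_{\gamma^j}(Q^j))$. For the Dynkin quiver $Q^j$, the standard identification of the normal space to a quiver orbit gives $\codim_\C(\Omega_{m^j};\Rep_{\gamma^j}(Q^j))=\dim_\C\mathrm{Ext}^1_{Q^j}(M^j,M^j)$, where $M^j$ is the representation with $m^j_k$ summands isomorphic to the indecomposable of dimension vector $\beta^j_k$. By the homological properties characterizing the ordering \eqref{eqn:pos.roots.order.within} (see \cite[Section~6.2]{mr2010} and \cite[Section~4 and the proof of Lemma~5.1]{rr2013}), $\mathrm{Hom}_{Q^j}$ vanishes on a pair of these indecomposables indexed by $(\beta^j_u,\beta^j_v)$ with $u<v$ while $\mathrm{Ext}^1_{Q^j}$ vanishes when $u\ge v$; combining this with the Euler--Ringel identity for $\C Q^j$ gives $\codim_\C(\Omega_{m^j};\Rep_{\gamma^j}(Q^j))=-\sum_{u<v}m^j_u m^j_v\chi_{Q^j}(\beta^j_u,\beta^j_v)=-\sum_{u<v}m^j_u m^j_v\chi(\beta^j_u,\beta^j_v)$ (the last step by the remark above). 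Summed over $j$, this is exactly the part of $-\sum_{u<u'}m_u m_{u'}\chi(\phi_u,\phi_{u'})$ coming from pairs $\phi_u,\phi_{u'}$ in the same $\Phi^j$, since the restriction of $\prec$ to each $\Phi^j$ satisfies the condition defining \eqref{eqn:pos.roots.order.within}. Finally, a cross-pair $\phi_u\in\Phi^j$, $\phi_{u'}\in\Phi^{j'}$ with $j\ne j'$ and $\phi_u\prec\phi_{u'}$ contributes $0$: the supports are disjoint, so $\chi(\phi_u,\phi_{u'})$ and $\chi(\phi_{u'},\phi_u)$ are minus weighted counts of the arrows $Q^j_0\to Q^{j'}_0$ and $Q^{j'}_0\to Q^j_0$ respectively; admissibility forbids a $2$-cycle in $P(Q,\cQ)$, so at most one of these is nonzero, and then $\lambda(\phi_u,\phi_{u'})=\chi(\phi_{u'},\phi_u)-\chi(\phi_u,\phi_{u'})\le 0$, forced by rule \eqref{eqn:order.rule.without}, gives $\chi(\phi_u,\phi_{u'})=0$. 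This establishes the identity and completes the proof. I expect this last step to be the real obstacle: admissibility enters in a genuine way twice (to identify $\chi_{Q^j}$ with $\chi$ and to kill the cross terms — contrast Example \ref{ex:Kronecker.DT}), and one must invoke the correct homological reading of the root ordering; the collapsing and the $\chi$-bookkeeping of the first two steps are routine manipulation in $\A_Q$.
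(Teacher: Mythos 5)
Your proof is correct, but it takes a genuinely different route from the paper's. The paper never collapses $Y_m$ to $y_\gamma$: instead it reorders the product to group the factors by subquiver, picking up a commutation factor $\Gamma = q^{\sum m_u m_v \lambda(\phi_u,\phi_v)}$ from the interleaving, applies \cite[Lemma~5.1]{rr2013} as a black box to each Dynkin block $Y^j_m$ (which already packages $\codim_\C(\Omega_{m^j};\Rep_{\gamma^j}(Q^j))$ into the exponent), sums via Proposition \ref{prop:codim.eta}, and then shows $\Gamma$ cancels against the powers of $q$ produced when the simple generators $y_{e_i}$ are finally sorted into the order $1,\ldots,n$. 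You instead do all the quantum-algebra bookkeeping in one closed form (collapse to $y_\gamma$, re-expand, rewrite $P-R$ via $\chi$), which eliminates the $\Gamma$-tracking entirely, but at the cost of having to prove the codimension identity $\codim_\C(\eta_m;\Rep_\gamma(Q)) = -\sum_{u<u'} m_u m_{u'}\chi(\phi_u,\phi_{u'})$ yourself; your derivation of it — Voigt's lemma identifying the normal space with $\mathrm{Ext}^1(M^j,M^j)$, the Euler--Ringel form, the homological reading of the ordering \eqref{eqn:pos.roots.order.within}, and the cross-term vanishing — is sound and essentially re-proves the content of \cite[Lemma~5.1]{rr2013} rather than citing it. A virtue of your version is that it isolates exactly where admissibility is used (once to identify $\chi|_{D^j\times D^j}$ with $\chi_{Q^j}$ because $P(Q,\cQ)$ has no loops, once to kill the cross terms because it has no $2$-cycles), whereas in the paper admissibility surfaces as the absence of cancellation in the exponent of $\Gamma$; the paper's version is shorter given the cited lemma and is agnostic about whether $\cQ$ is ordered (cf.\ Remark \ref{rem:Gamma.cancels}). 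Your sign and exponent bookkeeping checks out, including the reduction of $(-1)^{L+G}$ to $(-1)^{s_m}$ and the identities $-R=\tfrac12\sum_i\gamma(i)^2-\tfrac12\chi(\gamma,\gamma)$ and $P=\tfrac12\chi(\gamma,\gamma)-\tfrac12\sum_u m_u^2-\sum_{u<u'}m_um_{u'}\chi(\phi_u,\phi_{u'})$.
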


\begin{proof}
Write $\phi_u = \beta^{j(u)}_{k(u)}$ for each $u$. That is, $\phi_u\in \Phi^{j(u)}$ and it is the $k(u)$-th root in the ordering of $\Phi^{j(u)}$ described by \eqref{eqn:pos.roots.order.within}. Also, here and throughout the rest of the paper we freely write $m_u = m^{j(u)}_{k(u)}$ for entries in a Kostant series. Our first goal is to determine the coefficient $\Gamma$ after reordering as below
\begin{equation}
\label{eqn:rearrange.into.subquivers}
Y_m = y_{\beta^{j(1)}_{k(1)}}^{m_1} \cdots y_{\beta^{j(r)}_{k(r)}}^{m_r} 
	= \Gamma\cdot
		\left( y_{\beta^1_1}^{m^1_1} \cdots y_{\beta^1_{r_1}}^{m^1_{r_1}} \right) \cdots 
		\left( y_{\beta^\ell_1}^{m^\ell_1} \cdots y_{\beta^\ell_{r_\ell}}^{m^\ell_{r_\ell}} \right).
\end{equation}
Observe that for $j(u) = j(v)$, by construction we have $k(u)<k(v)$ whenever $u<v$. Thus, within the parentheses on the righthand side of \eqref{eqn:rearrange.into.subquivers}, the products are already ordered consistently with the conditions in \eqref{eqn:pos.roots.order.within}. This means that the only contributions to $\Gamma$ are powers of $q$ which arise from commuting $y_{\phi_u}$ past $y_{\phi_v}$ when $u<v$, but $j(u)>j(v)$. The resulting power of $q$ is $m_u m_v \lambda(\phi_u,\phi_v)$. Thus
\begin{equation}
\label{eqn:Gamma.formula}
\Gamma = q^{\sum m_u m_v \lambda(\phi_u,\phi_v)}
\end{equation}
where the sum is over pairs $(u,v)$ with $u<v$ but $j(u)>j(v)$. Now, whenever $u<v$, $j(u)>j(v)$, and $\lambda(\phi_u,\phi_v) \neq 0$, it means that the subquivers $Q^{j(u)}$ and $Q^{j(v)}$ are connected, necessarily by arrows $a\in Q_1$ having $ha\in Q^{j(u)}_0$ and $ta\in Q^{j(v)}_0$. In fact, since $u<v$ we must have $\lambda(\phi_u,\phi_v)<0$. Then, writing $\phi_u = \sum_i d^i_u \alpha_i = \sum_i d^i_u e_i$ we obtain
\begin{multline*}
m_u m_v \lambda(\phi_u,\phi_v) = m_u m_v \lambda\left( \sum_{i\in Q_0} d^i_u e_i, \sum_{i\in Q_0} d^i_v e_i \right)  \\
	= m_u m_v \lambda\left(d^{ha}_u e_{ha}, d^{ta}_v e_{ta}\right) = -m_u\, d^{ha}_u\, m_v\, d^{ta}_v
\end{multline*}
where we have used \eqref{eqn:lambda.trees} in the last equality. Hence we can re-express $\Gamma$ as follows
\[
\Gamma = q^{-\sum m_u d^{ha}_u m_v d^{ta}_v}
\]
where the sum is over $u<v$, $j(v)<j(u)$, and arrows $a\in Q_1$ such that $d^{ha}_u$ and $d^{ta}_v$ are nonzero. Such arrows must connect distinct subquivers. For fixed $u$, summing over the relevant $v$ gives 
\[
\textstyle \sum_v m_u d^{ha}_u m_v d^{ta}_v = m_u d^{ha}_u \gamma(ta).
\]
Next, we sum over the relevant $u$ to obtain
\[
\Gamma = q^{-\sum_a \gamma(ta)\gamma(ha)}
\]
where the sum is now over arrows $a$ which connect distinct subquivers $Q^{j'}$ and $Q^{j''}$ such that $j'<j''$ and $ta\in Q^{j'}_0$ and $ha \in Q^{j''}_0$.

Next, for each $1\leq j \leq \ell$, we define the product 
\[
Y^j_m =  y_{\beta^j_1}^{m^j_1} \cdots y_{\beta^j_{r_j}}^{m^j_{r_j}};
\]
that is, we have written $Y_m = \Gamma\cdot Y^1_m \cdots Y^\ell_m$. Since each subquiver $Q^j$ is Dynkin, and each $m^j \kp \gamma^j$, we can apply \cite[Lemma~5.1]{rr2013} to obtain that
\[
Y^j_m = (-1)^{\sum_{k=1}^{r_j} m_k^j (\sum_{i\in Q^j_0} d^i_k - 1)} \cdot q^{w^j} \cdot \prod^{\to}_{i\in Q^j_0} y_{e_i}^{\gamma^j(i)}
\]
where the arrow atop the product symbol indicates the multiplication must be done in order (from left to right) of increasing $i$. Further, \cite[Lemma~5.1]{rr2013} gives
\[
w^j = \codim_\C\left(\Omega_{m^j};\Rep_{\gamma^j}(Q^j)\right) + \frac{1}{2}\sum_{i\in Q^j_0} \gamma^j(i)^2 - \frac{1}{2}\sum_{k=1}^{r_j} (m^j_k)^2.
\]
Notice that by construction of our subquivers, we can replace $\gamma^j(i) = \gamma(i)$ for each $i\in Q^j_0$  since each vertex $i\in Q_0$ appears in exactly one subquiver.

Using Proposition \ref{prop:codim.eta} and (again) the fact that each vertex $i\in Q_0$ appears in exactly one subquiver $Q^j_0$, we obtain that
\begin{equation}
	\label{eqn:Ym.halfway}
Y_m = \Gamma \cdot Y^1_m \cdots Y^\ell_m =  \Gamma \cdot (-1)^{s_m} \cdot q^{w_m} \cdot \prod_{1\leq j \leq \ell}^{\to} \left(\prod_{i\in Q_0^j}^{\to} y_{e_i}^{\gamma(i)}\right)
\end{equation}
where
\begin{align*}
s_m &= \sum_{j=1}^\ell \sum_{k=1}^{r_j} m^j_k \left(\sum_{i\in Q_0^j} d^i_k - 1 \right)  &
w_m &= \sum_{j=1}^\ell w^j
\end{align*}
and algebraic simplification and the result of Proposition \ref{prop:codim.eta} yields exactly the stated expressions for $s_m$ and $w_m$.

At this point, we have expressed $Y_m$ in terms of a product of the $y_{e_i}$ variables, but we wish to reorder this product to the form $y_{e_1}^{\gamma(1)} y_{e_2}^{\gamma(2)} \cdots y_{e_n}^{\gamma(n)}$. To do so, we observe that the result of \cite[Lemma~5.1]{rr2013} already ensures that the parenthetical products in \eqref{eqn:Ym.halfway} are properly ordered for fixed $j$. Hence we need to commute $y_{e_{ha}}^{\gamma(ha)}$ to appear before $y_{e_{ta}}^{\gamma(ta)}$ only in the case that $a$ connects distinct subquivers, and $ta\in Q_0^{j'}$ and $ha \in Q_0^{j''}$ where $j'<j''$. We have the commutation relation, true for every $a\in Q_1$, from Equations \eqref{eqn:lambda.trees} and \eqref{eqn:qalg.comm}
\[
y_{e_{ta}}y_{e_{ha}} = q^{\lambda(e_{ta},e_{ha})} y_{e_{ha}}y_{e_{ta}} 
	= q\, y_{e_{ha}}y_{e_{ta}}.
\]
Whence it follows that
\[
\prod_{1\leq j \leq \ell}^{\to} \left(\prod_{i\in Q_0^j}^{\to} y_{e_i}^{\gamma(i)}\right)
	= q^{\sum \gamma(ta)\gamma(ha)} y_{e_1}^{\gamma(1)} \cdots y_{e_n}^{\gamma(n)} = \Gamma^{-1}\cdot y_{e_1}^{\gamma(1)} \cdots y_{e_n}^{\gamma(n)} 
\]
since the sum in the exponent of $q$ is over arrows $a$ as described above. Finally, we see that this implies that
\[
Y_m = (-1)^{s_m} \cdot q^{w_m} \cdot y_{e_1}^{\gamma(1)} \cdots y_{e_n}^{\gamma(n)}
\]
as desired. 
\end{proof}

\begin{remark}
	\label{rem:Gamma.cancels}
The value of the factor $\Gamma$ in \eqref{eqn:rearrange.into.subquivers} depended on the choice of order for the subquivers $Q^1,\ldots,Q^\ell$; that is, we did not assume that $\cQ$ was ordered (we needed the admissibility assumption to ensure no cancellation in the sums $\sum \gamma(ta)\gamma(ha)$ over arrows connecting distinct subquivers from $\cQ$). Since the order of the subquivers $Q^j$ was arbitrary, we should forecast the cancellation of $\Gamma$ in the last displayed equation above since the final step in our proof was the only other consideration which depended on that order. 

In other words, our proof of Proposition \ref{prop:compute.codim} applies even in the general case when the given admissible Dynkin subquiver partition $\cQ$ is not ordered. If it is ordered, then $\Gamma = 1$.
\end{remark}


\section{Reduction to normal forms}
\label{s:reduction.normal.forms}

For the Dynkin subquiver partition $\cQ = \{Q^1,\ldots,Q^\ell\}$ let $m\kp\gamma$ be a compatible Kostant series (the results of this section will apply even if $\cQ$ is not admissible). For each $1\leq j \leq \ell$, we have the normal form $\nu_{m^j}$ corresponding to each subquiver orbit $\Omega_{m^j}$. We define the \emph{normal locus} of the quiver stratum $\eta_m$ to be
\[
\nu_m = \left\{ (x_a)_{a\in Q_1} \in \eta_m : (x_a)_{a\in Q^j_1} = \nu_{m^j} \text{~for all $j$} \right\}.
\]
Observe that, as in the definition of $\eta_m$ from Section \ref{ss:subquiver.parts}, for arrows $a\in Q_1\setminus (\Union_{j=1}^\ell Q^j_1)$ we allow $x_a$ to be arbitrary, and hence we have a natural identification
\begin{equation}
	\label{eqn:nu.m.identification} 
	\nu_m \homeo \Dirsum_{a\in Q_1\setminus (\Union_{j=1}^\ell Q^j_1)} \Hom(V_{ta},V_{ha}) .
\end{equation}
Further, we define the \emph{isotropy subgroup} of the quiver stratum $\eta_m$ to be
\[
\G_{\eta_m} = \{ g\in \G_\gamma : g\cdot \nu_m = \nu_m \}.
\]
 
\begin{prop}
	\label{prop:isotropy.strata}
	There is an isomorphism $\G_{\eta_m} \iso \prod_{j=1}^\ell \G_{\Omega_{m^j}}$. Therefore, up to homotopy we have
	\[
	\G_{\eta_m} \hmtpc \prod_{j=1}^\ell \prod_{k=1}^{r_j} \U(m^j_k).
	\]
\end{prop}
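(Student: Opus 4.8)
The plan is to read off the isomorphism directly from the product structure of $\G_\gamma$ and then feed the result into Proposition \ref{prop:FR}. Because $Q_0 = \Union_{j=1}^\ell Q^j_0$ is a disjoint decomposition, we have a canonical identification $\G_\gamma = \prod_{i\in Q_0}\GL(V_i) \iso \prod_{j=1}^\ell \G_{\gamma^j}$, writing $g = (g^1,\ldots,g^\ell)$ with $g^j = (g_i)_{i\in Q^j_0}$. I would then show that under this identification the subgroup $\G_{\eta_m}$ corresponds exactly to $\prod_{j=1}^\ell \G_{\Omega_{m^j}}$, which gives the first assertion.

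To do this, unpack what it means for $g$ to preserve the set $\nu_m$, using that the $\G_\gamma$-action is componentwise on arrows. For an arrow $a\in Q^j_1$ and a point $x\in\nu_m$, the $a$-component of $g\cdot x$ is $g_{ha}(\nu_{m^j})_a g_{ta}^{-1}$; since the defining condition of $\nu_m$ pins the $Q^j_1$-components to those of the normal form $\nu_{m^j}$ (they are not merely constrained to lie in an orbit), membership $g\cdot x\in\nu_m$ forces $g_{ha}(\nu_{m^j})_a g_{ta}^{-1} = (\nu_{m^j})_a$ for all $a\in Q^j_1$, i.e. $g^j$ stabilizes the point $\nu_{m^j}\in\Rep_{\gamma^j}(Q^j)$, which is precisely the definition of $g^j\in\G_{\Omega_{m^j}}$ recorded before Proposition \ref{prop:FR}. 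Conversely, if each $g^j\in\G_{\Omega_{m^j}}$, then $g$ fixes every $Q^j_1$-component, and on the remaining factors $\Hom(V_{ta},V_{ha})$ indexed by the arrows connecting distinct subquivers — via the identification \eqref{eqn:nu.m.identification} — it acts by the linear automorphism $x_a\mapsto g_{ha}x_a g_{ta}^{-1}$ of each factor; since those components are allowed to be arbitrary, no constraint is imposed and $g\cdot\nu_m = \nu_m$. This yields $\G_{\eta_m}\iso\prod_{j=1}^\ell\G_{\Omega_{m^j}}$. Applying Proposition \ref{prop:FR} to each Dynkin subquiver $Q^j$ (with its $r_j$ positive roots) gives $\G_{\Omega_{m^j}}\hmtpc\prod_{k=1}^{r_j}\U(m^j_k)$, and since a finite product of homotopy equivalences is a homotopy equivalence, $\G_{\eta_m}\hmtpc\prod_{j=1}^\ell\prod_{k=1}^{r_j}\U(m^j_k)$.

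This argument is essentially bookkeeping, so I do not expect a serious obstacle. The one point requiring genuine care is the distinction between stabilizing $\nu_m$ as a set versus as a point: one must verify that preserving the set does not enlarge the group, and this is exactly where the rigidity of the $Q^j_1$-components is used, together with the fact that the connecting arrows contribute a full linear action on their $\Hom$-factors and hence no condition at all. It is also worth noting explicitly that the isomorphism above is literal (the factors sit inside disjoint coordinate blocks of $\G_\gamma$), so no conjugation ambiguity enters.
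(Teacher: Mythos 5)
Your proof is correct and follows the same route as the paper, which simply states that the isomorphism ``follows from the definitions'' and then invokes Proposition~\ref{prop:FR}; you have merely written out the bookkeeping (the block decomposition of $\G_\gamma$, the rigidity of the $Q^j_1$-components of $\nu_m$, and the absence of any constraint from the connecting arrows) that the paper leaves implicit. The point you flag about setwise versus pointwise stabilization is handled correctly and is indeed the only place where any care is needed.
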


\begin{proof}
The stated isomorphism follows from the definitions, and Proposition \ref{prop:FR} provides the stated homotopy equivalence.
\end{proof}

We now state a lemma which appeared in our joint work with Rim\'anyi \cite[Lemma~8.6]{jarr2018}. The upshot is that we can reduce the computation of the equivariant cohomology algebra of a quiver stratum to its normal locus.

\begin{lem}
	\label{lem:reduce.normal.form}
Let the group $G$ act on the space $X$. Suppose that $A \subseteq X$ is a subspace with isotropy subgroup $G_A = \{g\in G : g\cdot A = A\}$. Assume that
	\begin{itemize}
	\item every $G$-orbit in $X$ intersects $A$;
	\item if $g\in G$ such that there exists $a\in A$ with $g\cdot a \in A$, then $g \in G_A$.
	\end{itemize}
Then we have $H^*_G(X) \iso H^*_{G_A}(A)$. \qed
\end{lem}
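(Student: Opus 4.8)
The plan is to exhibit an explicit homotopy equivalence between the Borel constructions $EG \times_G X$ and $EG_A \times_{G_A} A$; the two hypotheses are precisely what is needed to realize the latter as a deformation retract of the former, or more cleanly, to identify both with a common third space. First I would consider the action of $G_A$ on $A$ (the second hypothesis guarantees $G_A$ is well-defined as a genuine subgroup and that it is exactly the set of group elements preserving $A$ setwise). The natural map to study is the inclusion-induced map
\[
EG \times_{G_A} A \longrightarrow EG \times_G X,
\]
where on the source we restrict the $G$-action on $EG$ to $G_A$ and use the inclusion $A \hookrightarrow X$. Since $EG$ is a free contractible $G$-space, it is in particular a free contractible $G_A$-space, so $EG \times_{G_A} A$ computes $H^*_{G_A}(A)$; this disposes of the right-hand side once I show the displayed map is a homotopy equivalence.

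The key step is then to show that $EG \times_{G_A} A \to EG \times_G X$ is a homeomorphism (or at least a weak equivalence). The first hypothesis — every $G$-orbit meets $A$ — gives surjectivity: any class $[e, x]$ with $x \in X$ can, after moving by some $g \in G$ with $g \cdot x \in A$, be written as $[eg^{-1}, g\cdot x]$ with $g \cdot x \in A$. The second hypothesis gives injectivity: if $[e_1, a_1] = [e_2, a_2]$ in $EG \times_G X$ with $a_1, a_2 \in A$, then there is $g \in G$ with $e_2 = e_1 g^{-1}$ and $a_2 = g \cdot a_1$; since $a_1, a_2 \in A$ this forces $g \in G_A$, so the two points already agree in $EG \times_{G_A} A$. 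A small amount of care is needed to check this bijection is a homeomorphism in the quotient topologies (the map is continuous and open, being induced by continuous open maps on the products), but this is routine. Taking cohomology then yields $H^*_G(X) \iso H^*_{G_A}(A)$.

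The main obstacle I anticipate is purely topological bookkeeping rather than anything conceptual: one must be slightly careful that the model of $EG$ being used restricts appropriately to a model of $EG_A$ (true for any functorial model, e.g.\ Milnor's join construction), and that the point-set identification respects topologies. An alternative, perhaps cleaner, route that sidesteps point-set subtleties is to observe that both hypotheses together say precisely that the map of homotopy quotients $A/\!/G_A \to X/\!/G$ is an equivalence because it is a pullback of $BG_A \to BG$ along $X/\!/G \to BG$ up to the fiber identification, but the direct argument above is the most transparent. In the application (Proposition~\ref{prop:isotropy.strata} together with this lemma), $X = \Rep_\gamma(Q)$, $G = \G_\gamma$, and $A = \nu_m$, and the two hypotheses hold because every point of $\eta_m$ lies in a single $\G_\gamma$-orbit meeting the normal locus and because an element carrying one normal-locus point to another must lie in $\G_{\eta_m}$ by the block structure of the normal forms.
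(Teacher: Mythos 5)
Your argument is correct and is the standard proof of this lemma: the two hypotheses say exactly that the natural map $EG\times_{G_A}A \to EG\times_G X$ (equivalently, the $G$-map $G\times_{G_A}A\to X$) is a bijection, whence the Borel constructions agree and the cohomologies are isomorphic. Note that the paper itself supplies no proof here --- the lemma is quoted verbatim from \cite[Lemma~8.6]{jarr2018} --- and your argument coincides with the one given there; the only caveat, which you already flag as point-set bookkeeping, is that a continuous bijection of quotients need not be a homeomorphism in complete generality, but in the intended application (a Lie group acting algebraically on a vector space, with $A$ a linear subspace) this is harmless, and a weak equivalence suffices for the cohomological conclusion.
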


We next obtain a result which we will implicitly need in Section \ref{s:KSS}.

\begin{prop}
	\label{prop:normal.form.context}
	There is an isomorphism $H^*_{\G_\gamma}(\eta_m) \iso H^*_{\G_{\eta_m}}(\nu_m)$. Moreover, since $\nu_m$ is a $\G_{\eta_m}$-equivariantly contractible vector space, we also have an isomorphism $H^*_{\G_{\eta_m}}(\nu_m) \iso H^*(B\G_{\eta_m})$.
\end{prop}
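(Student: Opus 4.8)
The plan is to verify the two hypotheses of Lemma~\ref{lem:reduce.normal.form} with $G = \G_\gamma$, $X = \eta_m$, and $A = \nu_m$; the identification of the isotropy subgroup $G_A$ with $\G_{\eta_m}$ is then immediate from the definition of $\G_{\eta_m}$. The second statement is essentially formal once the first is established, so the real content is checking those two bullet points.

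First I would establish that every $\G_\gamma$-orbit in $\eta_m$ meets $\nu_m$. Given $(x_a)_{a\in Q_1}\in\eta_m$, for each $j$ the tuple $(x_a)_{a\in Q^j_1}$ lies in $\Omega_{m^j}$, so there is $g^j\in\G_{\gamma^j}$ carrying it to the chosen normal form $\nu_{m^j}$. Since the vertex sets $Q^j_0$ partition $Q_0$, the group $\G_\gamma = \prod_{i\in Q_0}\GL(V_i)$ factors as $\prod_{j=1}^\ell \G_{\gamma^j}$, so the $g^j$ assemble into a single $g\in\G_\gamma$. Applying $g$ leaves us in $\eta_m$ (the rank/transversality conditions defining $\eta_m$ are $\G_\gamma$-invariant) and now the restriction to each $Q^j_1$ is exactly $\nu_{m^j}$, hence the result lies in $\nu_m$. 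This proves the first hypothesis.

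Next I would check the second hypothesis: if $g\in\G_\gamma$ and there exist $a,a'\in\nu_m$ with $g\cdot a = a'$, then $g\in\G_{\eta_m}$, i.e.\ $g\cdot\nu_m=\nu_m$. Writing $g = (g^j)_{j}$ under the product decomposition above, the condition $g\cdot a = a'$ restricted to the arrows of $Q^j$ says $g^j\cdot\nu_{m^j} = \nu_{m^j}$, so $g^j\in\G_{\Omega_{m^j}}$ for every $j$. But then $g$ fixes $\nu_{m^j}$ along every $Q^j_1$ while acting arbitrarily on the remaining coordinates $\Hom(V_{ta},V_{ha})$, $a\in Q_1\setminus\Union_j Q^j_1$, which are unconstrained in $\nu_m$ by \eqref{eqn:nu.m.identification}; hence $g\cdot\nu_m = \nu_m$, i.e.\ $g\in\G_{\eta_m}$. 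This also recovers $\G_{\eta_m}\iso\prod_j\G_{\Omega_{m^j}}$, consistent with Proposition~\ref{prop:isotropy.strata}.

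With both hypotheses verified, Lemma~\ref{lem:reduce.normal.form} yields $H^*_{\G_\gamma}(\eta_m)\iso H^*_{\G_{\eta_m}}(\nu_m)$. For the final claim, by \eqref{eqn:nu.m.identification} the space $\nu_m$ is a complex vector space on which $\G_{\eta_m}$ acts linearly (it acts by the base-change action on the $\Hom(V_{ta},V_{ha})$ factors), hence it is $\G_{\eta_m}$-equivariantly contractible to the origin; therefore $H^*_{\G_{\eta_m}}(\nu_m)\iso H^*_{\G_{\eta_m}}(\mathrm{point}) = H^*(B\G_{\eta_m})$. I do not anticipate a serious obstacle here; the one point requiring a little care is confirming that the linear $\G_{\eta_m}$-action on $\nu_m$ genuinely deformation-retracts equivariantly onto a point, but scalar multiplication $t\mapsto t\cdot(-)$ for $t\in[0,1]$ commutes with any linear action and does the job.
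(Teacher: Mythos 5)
Your proposal is correct and follows exactly the paper's route: the first isomorphism is Lemma~\ref{lem:reduce.normal.form} applied with $X=\eta_m$, $G=\G_\gamma$, $A=\nu_m$, $G_A=\G_{\eta_m}$, and the second follows from the identification \eqref{eqn:nu.m.identification} of $\nu_m$ with a $\G_{\eta_m}$-equivariantly contractible vector space. The only difference is that you spell out the verification of the two hypotheses of the lemma (via the factorization $\G_\gamma\iso\prod_j\G_{\gamma^j}$), which the paper treats as immediate from the definitions; your checks are accurate.
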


\begin{proof}
The first isomorphism is Lemma \ref{lem:reduce.normal.form} with $X = \eta_m$, $G = \G_\gamma$, $A = \nu_m$, and $G_A = \G_{\eta_m}$. The second isomorphism follows because of the identification, see \eqref{eqn:nu.m.identification}, of $\nu_m$ with a $\G_{\eta_m}$-equivariantly contractible vector space.
\end{proof}


\section{Kazarian spectral sequence}
\label{s:KSS}

Let the Lie group $G$ act on the real manifold $X$. Furthermore, suppose that $X$ admits a stratification $X = \theta_1 \union \theta_2 \union \cdots \union \theta_u$ into $G$-invariant submanifolds. Observe that only finitely many strata are permitted. Define the $G$-invariant spaces
\[
F_i = \mathop{\Union_{1\leq j \leq u}}_{\codim_\R(\theta_j;X)\leq i} \theta_j
\]
to obtain the following topological filtration of $X$:
\[
F_0 \subseteq F_1 \subseteq \cdots \subseteq F_{\dim_\R(X)} = X.
\]
Taking the \emph{Borel construction} $B_G F_i = EG \times_G F_i$ of each stratum yields a topological filtration of $B_G X$
\[
B_G F_0 \subseteq B_G F_1 \subseteq \cdots \subseteq B_G F_{\dim_\R(X)} = B_G X.
\]
We comment that we have already used $BG$ to denote $B_G(\text{point})$; we continue to do so in the rest of the paper.

Following the terminology of \cite{jarr2018,rr2013}, we call the cohomological spectral sequence associated to this last filtration the \emph{Kazarian spectral sequence of the action} and/or \emph{of the stratification}. The name is given in honor of Kazarian's contribution in the analogous context of singularity theory and Thom polynomials \cite{mk1997}, but the technique appears also in various other (even earlier) works, see e.g.~\cite{marb1983}.

We will apply the Kazarian spectral sequence in the context that $X = \Rep_\gamma(Q)$, $G = \G_\gamma$, and the $\theta_i$ are the quiver strata $\eta_m$. The needed results are summarized by the next theorem.

\begin{thm}
\label{thm:KSS.context}
In the above context, the Kazarian spectral sequence converges to $H^*(B\G_\gamma)$, degenerates at the $E_1$ page, and we have isomorphisms
\[
\pushQED{\qed}
E^{c,j}_1 \iso \mathop{\Dirsum_{m\kp\gamma}}_{ \codim_\R(\eta_m;\Rep_\gamma(Q)) = c } H^j\left(B\G_{\eta_m}\right).
\qedhere
\popQED
\]
\end{thm}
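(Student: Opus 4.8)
The plan is to establish the three assertions of Theorem~\ref{thm:KSS.context} --- convergence, $E_1$-degeneration, and the identification of the $E_1$ page --- by combining standard properties of the Kazarian spectral sequence with the structural results from Sections~\ref{s:reduction.normal.forms} and the preliminaries. The overarching principle is that everything in sight has cohomology concentrated in even degrees, which both pins down the $E_1$ page and forces degeneration for parity reasons.

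First I would recall the general setup: for the filtration $B_{\G_\gamma}F_0 \subseteq \cdots \subseteq B_{\G_\gamma}F_{\dim_\R X} = B_{\G_\gamma}X$ with $X = \Rep_\gamma(Q)$, the associated spectral sequence has $E_1$ page built from the relative cohomology groups $H^*(B_{\G_\gamma}F_i, B_{\G_\gamma}F_{i-1})$. The successive quotient $F_i/F_{i-1}$ is (a one-point compactification / Thom space of) the disjoint union of those strata $\eta_m$ of real codimension exactly $i$. Since each $\eta_m$ is a $\G_\gamma$-invariant submanifold, one uses the Thom isomorphism for the (orientable, since complex) normal bundle of $\eta_m$ inside $\Rep_\gamma(Q)$ to get $H^*(B_{\G_\gamma}F_i, B_{\G_\gamma}F_{i-1}) \iso \bigoplus_{\codim_\R \eta_m = i} H^{*-i}(B_{\G_\gamma}\eta_m)$. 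Then Proposition~\ref{prop:normal.form.context} identifies $H^*_{\G_\gamma}(\eta_m) = H^*(B_{\G_\gamma}\eta_m)$ with $H^*(B\G_{\eta_m})$, giving the stated formula
\[
E^{c,j}_1 \iso \mathop{\Dirsum_{m\kp\gamma}}_{ \codim_\R(\eta_m;\Rep_\gamma(Q)) = c } H^j\left(B\G_{\eta_m}\right)
\]
after reindexing so that $j$ records the cohomological degree on the $B\G_{\eta_m}$ factor. Convergence to $H^*(B\G_\gamma)$ is automatic: $F_{\dim_\R X} = X = \Rep_\gamma(Q)$ is $\G_\gamma$-equivariantly contractible, so $H^*(B_{\G_\gamma}X) \iso H^*(B\G_\gamma)$, and the filtration is finite.

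The degeneration at $E_1$ is where the real content lies, and I expect it to be the main obstacle --- or rather, the step requiring the most care. The argument is a parity one: by Proposition~\ref{prop:isotropy.strata}, $\G_{\eta_m}$ is homotopy equivalent to a product of unitary groups $\prod_k \U(m^j_k)$, so $H^*(B\G_{\eta_m})$ is a polynomial ring on generators of even degree and hence vanishes in all odd degrees. Since each real codimension $c = \codim_\R(\eta_m;\Rep_\gamma(Q)) = 2\codim_\C(\eta_m;\Rep_\gamma(Q))$ is even (the strata are complex subvarieties, cf.~Proposition~\ref{prop:codim.eta}), the Thom shift by $c$ preserves the even-degree concentration. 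Therefore $E^{c,j}_1$ is concentrated in even total degree $c+j$. The differential $d_r$ on page $E_r$ changes the filtration degree by $r$ and the total degree by $1$, hence maps even total degree to odd total degree; since $E_r$ (being a subquotient of $E_1$) is supported in even total degree, every $d_r$ with $r \geq 1$ vanishes. This is exactly the classical argument that a spectral sequence with $E_1$ concentrated in even total degree collapses. The points that need to be checked carefully are: that the normal bundle Thom isomorphism is available equivariantly (orientability holds because the bundle is complex, so this is fine), that the strata $\eta_m$ are genuinely smooth $\G_\gamma$-invariant submanifolds of the stated codimension so that the filtration by $F_i$ is a filtration by manifolds-with-the-right-codimension (this follows from the normal form description in Section~\ref{s:reduction.normal.forms} together with the orbit structure), and the bookkeeping of which $m$ contribute to a given $F_i \setminus F_{i-1}$. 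Once degeneration is in hand, $H^*(B\G_\gamma)$ acquires a filtration whose associated graded is $\bigoplus_m H^*(B\G_{\eta_m})$ up to the codimension shifts, which is precisely the content needed downstream in Section~\ref{s:main.thm.pf}.
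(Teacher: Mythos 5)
Your proposal is correct and follows essentially the same route as the paper, which itself only sketches the argument (citing \cite{jarr2018} and \cite{mk1997}): convergence from equivariant contractibility of $\Rep_\gamma(Q)$, the $E_1$ page from excision, the Thom isomorphism for the complex normal bundles, and Proposition~\ref{prop:normal.form.context}, and degeneration by the parity argument that all contributions sit in even total degree. If anything, your justification of the evenness of $j$ via Proposition~\ref{prop:isotropy.strata} (so that $H^*(B\G_{\eta_m})$ is a polynomial ring on even-degree Chern classes) is slightly more precise than the paper's remark attributing it to the strata being complex manifolds.
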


\begin{remark}[on the details of Theorem \ref{thm:KSS.context}]
The proof of the theorem above follows as a special case of \cite[Theorem~9.1]{jarr2018}. In that work, the more general setting of equivariant rapid decay cohomology is used. However, we still remark on a few particulars that allow the application of the Kazarian spectral sequence in this context, \textit{cf.}~\cite[Section~9.2]{jarr2018}. 

The convergence claim follows from the fact that $\Rep_\gamma(Q)$ is equivariantly contractible, and so $H^*_{\G_\gamma}(\Rep_\gamma(Q)) \iso H^*(B\G_\gamma)$. The sequence degenerates at page $E_1$ since $\Rep_\gamma(Q)$ and each of the strata $\eta_m$ are actually complex manifolds and thus every nonzero contribution on the first page must have the form $E^{c,j}_1$ where both $c$ and $j$ are even.

The description of the $E_1$ page comes from its definition in terms of relative cohomologies, after applying excision and the Thom isomorphism. Details from the original work \cite{mk1997} are cited in \cite{rr2013} in the context where the strata are actually orbits. However, in Theorem \ref{thm:KSS.context} we have a slightly more robust version where each stratum consists of a union of orbits. The details in this case appear in the author's aforementioned joint work with Rim\'anyi \cite[Theorem~9.1]{jarr2018}. Comparing to that statement, the direct summands should be $H^j_{\G_\gamma}(\eta_m)$, but Proposition \ref{prop:normal.form.context} allows us to replace $H^j_{\G_\gamma}(\eta_m)$ with $H^j(B\G_{\eta_m})$ in the direct summands above.
\end{remark}

The main conclusion we need from Theorem \ref{thm:KSS.context} is the following $q$-series identity.

\begin{cor}
\label{cor:KSS.Betti.identity}
For every dimension vector $\gamma$, we have the following identity for Betti numbers
\begin{equation}
	\label{eqn:kss.betti.ident}
	\cP_{\gamma(1)}\cdots\cP_{\gamma(n)} 
	= \sum_{m\kp\gamma} 
		q^{\codim_\C(\eta_m;\Rep_\gamma(Q))} \,\cP_{m_1}\cdots\cP_{m_r}.
\end{equation}
We remark that $r$ is a function of $m$ in the summation above, and that $m$ ranges over Kostant series compatible with a fixed Dynkin subquiver partition $\cQ$.
\end{cor}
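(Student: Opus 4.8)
The plan is to read off the identity \eqref{eqn:kss.betti.ident} from Theorem~\ref{thm:KSS.context} by passing to Poincaré series. Since the Kazarian spectral sequence converges to $H^*(B\G_\gamma)$ and degenerates at the $E_1$ page, the graded vector space $H^*(B\G_\gamma)$ carries a filtration whose associated graded is $\Dirsum_{c,j}E_1^{c,j}$; because we are over a field this forces a (non-canonical) identification $\dim_\R H^n(B\G_\gamma)=\sum_{c+j=n}\dim_\R E_1^{c,j}$, where in the cohomological indexing a class in $E_1^{c,j}$ with filtration index $c$ (here the \emph{real} codimension of the corresponding union of strata) contributes to $H^{c+j}(B\G_\gamma)$. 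Combining this with the description $E_1^{c,j}\iso\Dirsum_{m:\codim_\R(\eta_m)=c}H^j(B\G_{\eta_m})$ supplied by Theorem~\ref{thm:KSS.context}, I obtain for each $n$
\[
\dim_\R H^n(B\G_\gamma)=\sum_{c+j=n}\ \mathop{\sum_{m\kp\gamma}}_{\codim_\R(\eta_m;\Rep_\gamma(Q))=c}\dim_\R H^j(B\G_{\eta_m}).
\]

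Next I would multiply by $q^{n/2}$ and sum over $n$; using $\codim_\R(\eta_m)=2\codim_\C(\eta_m)$, the right-hand side regroups into
\[
\cP[H^*(B\G_\gamma)]=\mathop{\sum_{m\kp\gamma}}q^{\codim_\C(\eta_m;\Rep_\gamma(Q))}\,\cP[H^*(B\G_{\eta_m})].
\]
Then I would evaluate the two Poincaré series appearing here. Since $\G_\gamma=\prod_{i\in Q_0}\GL(V_i)$, the Künneth theorem gives $\cP[H^*(B\G_\gamma)]=\cP_{\gamma(1)}\cdots\cP_{\gamma(n)}$ (there is no $\mathrm{Tor}$ contribution, as all the cohomology sits in even degree — which is also why these objects are honest power series in $q$). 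On the other side, Proposition~\ref{prop:isotropy.strata} gives $\G_{\eta_m}\hmtpc\prod_{u=1}^{r}\U(m_u)$, so $B\G_{\eta_m}$ has the homotopy type of $\prod_{u=1}^{r}B\U(m_u)$, and Künneth again yields $\cP[H^*(B\G_{\eta_m})]=\cP_{m_1}\cdots\cP_{m_r}$. Substituting these two evaluations is exactly \eqref{eqn:kss.betti.ident}, with $r$ depending on $m$ as noted.

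The substantive input — $E_1$-degeneration together with the identification of the $E_1$ page with Betti numbers of the classifying spaces $B\G_{\eta_m}$ — is already packaged in Theorem~\ref{thm:KSS.context}, so the only thing requiring care in this argument is the degree bookkeeping: one must confirm that the filtration degree of the Kazarian spectral sequence is the real codimension and that the relevant Thom isomorphism shifts cohomological degree by precisely that amount, so that the weight attached to the summand indexed by $m$ comes out to $q^{\codim_\C(\eta_m)}$ rather than some other power of $q^{1/2}$. Once this is pinned down, no further computation is needed, and the corollary follows.
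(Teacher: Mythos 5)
Your argument is correct and follows the same route as the paper: convergence and $E_1$-degeneration of the Kazarian spectral sequence give the dimension count, the K\"unneth formula evaluates $\cP[H^*(B\G_\gamma)]$ as $\cP_{\gamma(1)}\cdots\cP_{\gamma(n)}$, and Proposition~\ref{prop:isotropy.strata} with K\"unneth gives $\cP[H^*(B\G_{\eta_m})]=\cP_{m_1}\cdots\cP_{m_r}$. You simply make explicit the degree bookkeeping (total degree $c+j$, with $q^{c/2}=q^{\codim_\C(\eta_m)}$) that the paper leaves implicit in the phrase ``Theorem~\ref{thm:KSS.context} implies the required identity.''
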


\begin{proof}
The homeomorphism $B\G_\gamma \homeo \prod_{i\in Q_0} B\GL(\C^{\gamma(i)})$ and the K\"unneth formula imply that $H^*(B\G_\gamma) \iso \Tensor_{i\in Q_0} H^*(B\GL(\C^{\gamma(i)}))$. Thus, the Poincar\'e series of the algebra $H^*(B\G_\gamma)$ is exactly the lefthand side of \eqref{eqn:kss.betti.ident}.

On the other hand, Proposition \ref{prop:isotropy.strata} and the K\"unneth formula similarly imply that \[\cP[H^*(B\G_{\eta_m})] = \prod_{u=1}^r \cP_{m_u}.\] Finally, since the Kazarian spectral sequence converges to $H^*(B{\G_\gamma})$ and degenerates at the $E_1$ page, Theorem \ref{thm:KSS.context} implies the required identity.
\end{proof}

\begin{example}
Observe that the $q$-series identity \eqref{eqn:q-series.identity} is a special case of Corollary \ref{cor:KSS.Betti.identity} with $Q=A_2$, $\cQ = \{1\leftarrow 2\}$, $\gamma=(2,2)$, and so $\ell = 1$ and $r=|\Phi_{A_2}| = 3$. 
\end{example}

Further, we remark again that the results of this section apply even when the given Dynkin subquiver partition is not admissible. However, although we obtain $q$-series identities for each dimension vector $\gamma$ (this is Corollary \ref{cor:KSS.Betti.identity}) they cannot be simultaneously organized into a quantum dilogarithm factorization of $\E_Q$ as in Theorem \ref{thm:main} unless a total order on the roots $\Phi(Q,\cQ)$ exists; that is, unless our chosen Dynkin subquiver partition is admissible.


\section{Proof of the main theorem}
\label{s:main.thm.pf}

Our goal is to now prove Theorem \ref{thm:main}, and so we restate its conclusion. For any acyclic quiver $Q$, admissible Dynkin subquiver partition $\cQ = \{Q^1,\ldots,Q^\ell\}\dsqp Q$, and total ordering
\[
\phi_1 \prec \cdots \prec \phi_r
\]
on the roots $\Phi(Q,\cQ)$ as described in \eqref{eqn:order.rules.admissible}, we obtain the following factorization of the combinatorial DT invariant for $Q$
\begin{equation}
	\label{eqn:main.restate}
	\E_Q = \E(y_{\phi_1}) \cdots \E(y_{\phi_r}).
\end{equation}
We will proceed by comparing, for a fixed dimension vector $\gamma \in D_Q$, the terms involving \[\yy^\gamma := y_{e_1}^{\gamma(1)}\cdots y_{e_n}^{\gamma(n)}\] on the lefthand and righthand sides of \eqref{eqn:main.restate}. On the left, we apply our knowledge of the trivial factorization \eqref{eqn:trivial.factorization} and the formula \eqref{eqn:E.defn.poincare} to see that the coefficient of $\yy^\gamma$ is
\begin{equation}
	\label{eqn:lhs.coeff}
	(-1)^{\sum_{i\in Q_0} \gamma(i)}\,q^{\frac{1}{2} \sum_{i\in Q_0} \gamma(i)^2}
		\cP_{\gamma(1)} \cdots \cP_{\gamma(n)}.
\end{equation}
On the right, we get contributions to the $\yy^\gamma$ term only from expressions involving $Y_m = y_{\phi_1}^{m_1} \cdots y_{\phi_r}^{m_r}$ for which $m\kp \gamma$ is a compatible Kostant series. Therefore, using the notation of Proposition \ref{prop:compute.codim}, we see that the $\yy^\gamma$ term is equal to the sum
\[
\sum_{m\kp \gamma} (-1)^{\sum_{u=1}^r m_u} \, q^{\frac{1}{2}\sum_{u=1}^r m_u^2} \, \cP_{m_1}\cdots\cP_{m_r} \, Y_m 
\] 
which, by Equation \eqref{eqn:positives.to.simples} from Proposition \ref{prop:compute.codim}, is further equal to
\[
\left( \sum_{m\kp \gamma} (-1)^{s_m + \sum_{u=1}^r m_u} \, q^{w_m + \frac{1}{2}\sum_{u=1}^r m_u^2} \, \cP_{m_1}\cdots\cP_{m_r} \right) \yy^\gamma.
\]
Plugging in the formulas \eqref{eqn:sm.formula} for $s_m$ and \eqref{eqn:wm.formula} for $w_m$ from Proposition \ref{prop:compute.codim}, and using that $\gamma(i) = \sum_{u=1}^r m_u d^i_u$ for all $i\in Q_0$, we see that the resulting coefficient of $\yy^\gamma$ is
\begin{multline}
	\label{eqn:rhs.coeff}
	\sum_{m\kp\gamma} 
		(-1)^{\sum_{i\in Q_0}\gamma(i)} \,
		q^{\codim_\C(\eta_m;\Rep_\gamma(Q)) + \frac{1}{2}\sum_{i\in Q_0} \gamma(i)^2} \,
		\cP_{m_1}\cdots\cP_{m_r}	\\
	= (-1)^{\sum_{i\in Q_0}\gamma(i)} \, q^{\frac{1}{2}\sum_{i\in Q_0} \gamma(i)^2} \,
		\sum_{m\kp\gamma} q^{\codim_\C(\eta_m;\Rep_\gamma(Q))} \,\cP_{m_1}\cdots\cP_{m_r}.
\end{multline}
Finally, we conclude that \eqref{eqn:lhs.coeff} and \eqref{eqn:rhs.coeff} are equal because of the Kazarian spectral sequence identity \eqref{eqn:kss.betti.ident} from Corollary \ref{cor:KSS.Betti.identity}. \qed

\section{Applicable directions}
\label{s:FD}
We comment now on several other possible generalizations and applications of the present work. 

\subsection{Interpolations in the non-acyclic case}
	\label{ss:interp.non-acyclic}
Keller describes the existence of maximal green sequences for many other classes of quivers (not just acyclic) \cite[Section~6]{bk2011}. However, in the non-acyclic case, one must invoke the theory of quivers with potential and compute Betti numbers in the so-called \emph{rapid decay} equivariant cohomology of quiver strata, see e.g.~\cite[Section~4.7]{mkys2011}. This is difficult in general, but the author and Rim\'anyi accomplished this in the case of square products of $A$-type Dynkin quivers \cite{jarr2018}. In that work, two natural stratifications of the representation space are defined, but it would be interesting to describe more general strata which allow interpolation between the quantum dilogarithm identities in that work.

\subsection{Combinatorial interpretations}
Rim\'anyi--Weigandt--Yong produced a tableaux counting proof for the extremal factorizations in the Dynkin type $A$ case \cite{rraway2018}. In the special case of $A_2$, their argument reduces to the famous Durfee's square identity for counting partitions. It would be interesting to extend their methods to a purely combinatorial proof of the interpolating factorizations of Theorem \ref{thm:main}. On the level of $q$-series, this would lead to new partition counting identities.

\subsection{Counting maximal green sequences}
The problem of counting the number and length of possible maximal green sequences for a quiver has received considerable attention. Several papers have addressed the bounds on the length of such sequences \cite{tbgdmp2014,shki2019,tbshkigt2017,agtmks2018} on general classes of quivers (and mutation types). Our present work can be viewed in one sense as a confirmation of these results on the minimal length of maximal green sequences, and on the maximal length in the Dynkin case. Further, it would be interesting to investigate connections to the ``No Gap Conjecture'', which states that the possible lengths of maximal green sequences (and hence for factorizations of the combinatorial DT invariant) form an interval of integers, see \cite[Conjecture~2.2]{tbgdmp2014}. In the work of Hermes--Igusa \cite{shki2019}, the conjecture is proven for acyclic quivers of tame type. The conjecture is also known to hold in several other instances, including for some non-acyclic quivers, but is still open in general for non-acyclic and wild cases, see e.g.~\cite{agtm2019} and the references therein.


\bibliographystyle{plainurl}
\bibliography{jmabib}
\end{document}